\newtheorem{theorem}{Theorem}[section]
\newtheorem{lemma}[theorem]{Lemma}
\newtheorem{e-proposition}[theorem]{Proposition}
\newtheorem{corollary}[theorem]{Corollary}
\newtheorem{e-definition}[theorem]{Definition\rm}
\newcommand{\charI}{1 \hspace*{-1mm} {\rm l}}
\def\ignore#1{}
\def\eq{\begin{equation}}
\def\en{\end{equation}}
\def\eqa{\begin{eqnarray}}
\def\ena{\end{eqnarray}}
\def\eqs{\begin{eqnarray*}}
\def\ens{\end{eqnarray*}}
\def\bZ{{\mathbb Z}}
\def\bP{{\mathbb P}}
\def\bE{{\mathbb E}}
\def\re{{\mathbb R}}
\def\bPstar{\bP_{i,s^*}}
\def\non{\nonumber}
\def\rstar{\rho_{s^*}}
\def\sjmo{\sum_{j\in\bZ\setminus\{0\}}}
\def\intnc{\lfloor nc \rfloor}
\def\tg{{\tilde g}}
\def\s{\sigma}
\def\l{\lambda}
\def\d{\delta}
\def\Ref#1{(\ref{#1})}
\def\hP{{\widehat P}}
\def\Eq{\ =\ }
\def\Le{\ \le\ }
\def\Z{{\bf Z}}
\def\h{\eta}
\def\t{\tau}
\def\n{\nu}
\def\e{\varepsilon}
\def\f{\phi}
\def\a{\alpha}
\def\hl{{\hat \l}}
\def\gg{{\mathcal G}}
\def\nin{\noindent}
\def\tV{{\tilde V}}
\begin{document}

\title[Translated Poisson approximation]{Translated Poisson approximation to \\ equilibrium distributions of Markov
population processes}

\author[S.\ Socoll \& A. D. Barbour]{Sanda N.\ Socoll \& A. D. Barbour}

\address{Institut f\"ur Mathematik, Universit\"at Z\"urich-Irchel \\ Winterthurerstr. 190, 8057 Z\"urich \\ Switzerland}

\begin{abstract}
The paper is concerned with the equilibrium
distributions of continuous-time density dependent Markov processes on the integers. 
These distributions
are known typically to be approximately normal, with $O( 1 /{\sqrt{n}})$ error as 
measured in
Kolmogorov distance. Here, an approximation in the much stronger total variation norm 
is established, 
without any loss in the asymptotic order of accuracy; the approximating distribution
is a translated Poisson distribution having the same variance and (almost) the same mean. 
Our arguments are based on the Stein-Chen method and Dynkin's formula. 
\end{abstract} 

\subjclass[2000]{60J75; 62E17}

\keywords{continuous-time Markov process; equilibrium distribution; total-variation distance; infinitesimal generator; Stein-Chen
method; point process}

\thanks{This work was supported in part by the Swiss National Foundation, Project No.~200020-107935/1;
some of it was undertaken while the authors were visiting the Institute for Mathematical Sciences, 
National University of Singapore.}

\maketitle{}

\setcounter{equation}{0}
\section{Introduction}

Density dependent Markov population processes, in which the transition rates depend
on the density of individuals in the population, have proved widely useful as models in the social and life
sciences: see, for example, the monograph of Kurtz~(1981), in which approximations in terms of
diffusions are extensively discussed, in the limit as the typical population size~$n$ tends to
infinity. Here, we are interested in the behavior at equilibrium.
Our starting point is the paper of Barbour~(1980), in which conditions are given 
for the existence of an equilibrium distribution concentrated close to the 
deterministic equilibrium, together with a bound of
order $O(1/\sqrt{n})$ on the Kolmogorov distance between the equilibrium distribution and a suitable
normal distribution. 
We now show that this normal approximation can be substantially strengthened. Using a delicate argument based
on the Stein--Chen method, we are able to establish an approximation in total variation in terms of a
translated Poisson distribution. What is more, our error bounds with respect to this much stronger metric, 
and under weaker assumptions than those previously considered, are still of ideal order 
$O(1/\sqrt{n})$. 

The first step in the argument is to establish the existence of an equilibrium distribution under
suitable conditions, and to show that it is appropriately concentrated around the `deterministic'
equilibrium, defined to be the stationary point of an associated system of differential equations
which describe the average drift of the process in the limit as $n\to\infty$; this is
accomplished in Section~\ref{equilibrium}.  The closeness of
this distribution to our approximation is then established in Section~\ref{main}, by showing 
that Dynkin's formula, applied in equilibrium, yields an equation not far removed from the Stein equation
for a centred Poisson distribution, enabling ideas related to Stein's method to be
brought into play.  An important element in obtaining an approximation in total variation is to
show {\it a priori\/} that the equilibrium distribution is sufficiently smooth, in the sense
that translating it by a single unit changes the distribution only by order $O(1/\sqrt{n})$ in total
variation: see, for example, R\"ollin~(2005). The corresponding argument is to be found in 
Section~\ref{one-shift}. We illustrate the results by applying them to a birth, death 
and immigration process, with births occurring in groups.

\subsection{Basic approach}\label{prelims}

We start by defining our density dependent sequence of Markov processes.  For each $n\in\mathbb N$,
let $Z_{n}(t)$, $t\ge0$,  be an 
irreducible continuous time pure jump Markov process taking values in $\mathbb Z$, with transition
rates given by 
$$
  i\ \to\  i+j \quad \mbox{ at rate }\quad n\lambda_j\Big(\frac{i}{n}\Big),\qquad i \in {\mathbb Z},\ 
  j\in \mathbb Z\setminus \{0\},
$$ 
where the $\lambda_j(\cdot)$ are prescribed functions on $\mathbb R$; we set
$$
   z_{n}(t)\ :=\ n^{-1}Z_{n}(t), \quad t \ge0.
$$
We then define an `average growth rate' of the process $z_{n}$ at $z \in n^{-1}{\mathbb Z}$ by
$$
  F(z)\ :=\ \sum_{j\in \mathbb Z\setminus \{0\}}j{{\lambda}_{j}}(z),
$$ 
and a `quadratic variation' function by $n^{-1}{\sigma}^2(z)$,
where 
$$
  {\sigma}^2(z)\ =\ \sum_{j\in \mathbb Z\setminus \{0\}} j^2  {{\lambda}_{j}}(z),
$$
assumed to be finite for all~$z\in\re$.
\ignore{
For the rate at which the process leaves the state $z$,  we write $n\lambda(z)$, where
$$
   \lambda(z):=\sum_{j\in \mathbb Z\setminus \{0\}} \lambda_j(z).
$$
}

The `law of large numbers' approximation shows that, for large~$n$, the time dependent
development of the process~$z_n$ runs close to the solution of the differential equation system
$\dot z = F(z)$, with the same initial condition, and that there is a
approximately diffusive behaviour on a scale~$n^{-1/2}$ about this path (Kurtz~1970, 71). If~$F$ has a
single zero at a point~$c$, and is such that~$c$ is globally attracting for the
differential equation system, then~$Z_n$ has an 
equilibrium distribution~$\Pi_n$ that is approximately normal,
and puts mass on a scale~$n^{1/2}$ around~$nc$ (Barbour~1980).
The corresponding asymptotic variance is given by $n^{1/2}v_c$ with $v_c := \frac{\sigma^2(c)}{-2F'(c)}$,
provided that $F'(c) < 0$, and the error of the approximation in Kolmogorov distance
is of ideal order~$O(n^{-1/2})$ if only finitely many of the functions~$\l_j$ are non-zero.  

In this paper, we strengthen this result, by proving an accurate approximation to the
equilibrium distribution using another distribution on the integers.  Under 
assumptions similar to those needed for the previous normal approximation, we prove 
that the distance in total variation between the centred
equilibrium distribution $\Pi_{n}-\lfloor nc \rfloor$  and the centred Poisson distribution 
$$
  \widehat{\rm Po}(nv_c)\ :=\ {\rm Po}(nv_c)*\d_{-\lfloor nv_c\rfloor}
$$ 
is of order $O(n^{-1/2})$: here and subsequently, $\d_r$ denotes the point mass on~$r$, 
and~$*$ denotes convolution.  If infinitely many of the~$\l_j$ are allowed to be non-zero,
but satisfy the analogue of a $(2+\a)$'th moment condition, for some $0 < \a \le 1$,
we prove that the error is of order $O(n^{-\a/2})$.

The proof of our approximation runs as follows.
The infinitesimal generator ${\mathcal A}_n$ of~$Z_{n}$, acting on a function $h$,  is given by 
$$
  ({\mathcal A}_n h)(i)\ :=\ 
   \sum_{j\in \mathbb Z\setminus \{0\}}n{{\lambda}_{j}}\Big(\frac{i}{n}\Big)\big[h(i+j)-h(i)\big],
\quad i \in {\mathbb Z}.
$$
In equilibrium, under appropriate assumptions on~$h$, Dynkin's formula implies that
\begin{equation}\label{DF}
  {\mathbb E}({\mathcal A}_n h)(Z_{n})=0.
\end{equation}
The following lemma, whose proof we omit, expresses ${\mathcal A}_n h$ in an alternative form.

\begin{lemma}\label{generator} Suppose that $\sjmo j^2\l_j(z) < \infty$ for all~$z\in\re$.
Then, for any function $h\colon \mathbb Z\to \mathbb R$  with bounded differences,  we have 
\eq\label{basic}
   ({\mathcal A}_nh)(i) 
      \ =\ \frac{n}{2}{\sigma}^2\Big(\frac{i}{n}\Big)\bigtriangledown{g_h}(i)
        +nF\Big(\frac{i}{n}\Big)g_h(i) + E_n(g,i), 
\en
where $\bigtriangledown{f}(i):=f(i)-f(i-1)$ and $g_h(i) := \bigtriangledown{h}(i+1)$
and, for any $i\in\bZ$,
\eqa
   \lefteqn{E_n(g,i)}\non\\ 
  &:=&  -\frac{n}{2}F\Big(\frac{i}{n}\Big)\bigtriangledown{g_h}(i)
      +\sum_{j\geq 2}a_j(g,i) n\lambda_j\Big(\frac{i}{n}\Big) 
      -\sum_{j\geq 2}b_j(g,i) n{{\lambda}_{-j}}\Big(\frac{i}{n}\Big), 
    \label{En-def}
\ena 
with
\eqa
  2a_j(g,i) &:=& -j(j-1)\bigtriangledown g(i)  
     + 2\sum_{k=1}^{j-1} {k \bigtriangledown g(i+j-k)} \label{aj-bnd-1}\\ 
  &=&   2\sum_{k=2}^j \binom{k}{2} \bigtriangledown^2{g_h}(i+j-k+1);\label{aj-bnd-2}\\
  2b_j(g,i) &:=& j(j-1)\bigtriangledown g(i)  
     - { 2\sum_{k=1}^{j-1} k \bigtriangledown g(i-j+k)} \non \\ 
  &=&   2\sum_{k=2}^j \binom{k}{2} \bigtriangledown^2{g_h}(i-j+k). \non
\ena  
\end{lemma}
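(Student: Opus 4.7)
The plan is to perform a discrete second-order Taylor-like telescoping of each increment $h(i+j) - h(i)$ in terms of $g_h(i)$ and the first differences $\bigtriangledown g_h$, then multiply by $n\lambda_j(i/n)$, sum over $j \neq 0$, and read off the $F$ and $\sigma^2$ contributions. Absolute convergence of every series is guaranteed by the standing hypothesis $\sjmo j^2\lambda_j(z) < \infty$ together with the boundedness of $\bigtriangledown g_h$, which follows from $h$ having bounded differences.

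First I would treat $j \geq 1$: telescoping gives $h(i+j) - h(i) = \sum_{k=0}^{j-1} g_h(i+k)$, and writing $g_h(i+k) = g_h(i) + \sum_{l=1}^{k}\bigtriangledown g_h(i+l)$ and swapping the order of summation yields
\eq
   h(i+j) - h(i) \Eq j\,g_h(i) + \sum_{k=1}^{j-1} k\,\bigtriangledown g_h(i+j-k).
\en
Splitting off the $\bigtriangledown g_h(i)$ contribution then recasts this as $j\,g_h(i) + \frac{j(j-1)}{2}\bigtriangledown g_h(i) + a_j(g_h,i)$, with $a_j$ exactly as in \Ref{aj-bnd-1}. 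For $j = -m$, $m \geq 1$, the mirror telescoping $h(i-m) - h(i) = -\sum_{l=1}^m g_h(i-l)$ together with $g_h(i-l) = g_h(i) - \sum_{p=0}^{l-1}\bigtriangledown g_h(i-p)$ produces $-m\,g_h(i) + \sum_{k=1}^{m} k\,\bigtriangledown g_h(i-m+k)$; pulling out the $k = m$ term, which contributes $m\,\bigtriangledown g_h(i)$, and invoking the definition of $b_j$, this becomes $-m\,g_h(i) + \frac{m(m+1)}{2}\bigtriangledown g_h(i) - b_m(g_h,i)$.

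The coefficient of $\bigtriangledown g_h(i)$ is therefore $j(j-1)/2$ for $j \geq 1$ and $|j|(|j|+1)/2$ for $j \leq -1$; both expressions equal $(j^2 - j)/2$. Weighting by $n\lambda_j(i/n)$ and summing then gives $\frac{n}{2}\sigma^2(i/n)\bigtriangledown g_h(i) - \frac{n}{2}F(i/n)\bigtriangledown g_h(i)$, which is precisely why the $-\frac{n}{2}F(i/n)\bigtriangledown g_h(i)$ correction appears inside $E_n$ in \Ref{En-def}. The coefficient of $g_h(i)$ is $\sum_{j}j\lambda_j(i/n) = F(i/n)$, yielding the $nF(i/n)g_h(i)$ term, and the leftover $a_j$ and $b_j$ pieces (with $a_1 = b_1 = 0$, so only $j \geq 2$ survives) supply the remaining two sums in $E_n$.

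Finally, the equivalence of the two forms \Ref{aj-bnd-1} and \Ref{aj-bnd-2} of $a_j$, and analogously for $b_j$, is an algebraic identity that I would verify by Abel summation: substituting $\bigtriangledown g(i+j-k) = \bigtriangledown g(i) + \sum_{l=1}^{j-k}\bigtriangledown^2 g_h(i+l)$ in \Ref{aj-bnd-1} and interchanging summation order regroups the contributions into the $\binom{k}{2}$ weights of \Ref{aj-bnd-2}, with the $-j(j-1)\bigtriangledown g(i)$ boundary term cancelling exactly. The hard part is purely bookkeeping: the asymmetry between the $j(j-1)/2$ and $|j|(|j|+1)/2$ coefficients in the positive- versus negative-jump expansions is the subtle step, and missing it would lose the $-\frac{n}{2}F\bigtriangledown g_h$ drift correction that distinguishes $E_n$ from a clean remainder term.
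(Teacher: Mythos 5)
Your proposal is correct: the telescoping of $h(i+j)-h(i)$ into $jg_h(i)+\sum_{k=1}^{j-1}k\bigtriangledown g_h(i+j-k)$ (and its mirror for negative jumps), the identification of the $\bigtriangledown g_h(i)$-coefficient as $(j^2-j)/2$ in both cases — which is exactly what produces the $\tfrac n2\sigma^2\bigtriangledown g_h-\tfrac n2 F\bigtriangledown g_h$ split — and the Abel-summation passage from \Ref{aj-bnd-1} to \Ref{aj-bnd-2} all check out, with absolute convergence secured by $\sjmo j^2\l_j<\infty$ and the boundedness of $\bigtriangledown g_h$. The paper omits its proof of this lemma, but your derivation is the natural one and fills that gap correctly.
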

 
\medskip
Writing \Ref{DF} using the result of Lemma~\ref{generator} leads to the required
approximation, as follows.
In equilibrium, $Z_n/n$ is close to~$c$, as is shown in the next section, and so the main 
part of~\Ref{basic} is close to
$$
   -F'(c)\left\{\frac{n\sigma^2(c)}{-2F'(c)}\bigtriangledown{g_h}(i) - (i-nc)g_h(i)\right\},
$$
because $F(c)=0$.
Here, the term in braces is very close to the Stein operator for the centred Poisson
distribution~$\hP(nv_c)$ with $v_c = \frac{\sigma^2(c)}{-2F'(c)}$, applied to the 
function~$g_h$: see R\"ollin~(2005).  Indeed, for any~$v > 0$ and $B \subset \Z_v$, 
where $\Z_v := \{l \in \mathbb Z,\,l \ge -\lfloor v\rfloor\}$, one can write
\begin{equation}\label{SEt}
  \charI_{B}(l)-\widehat{{\rm Po}}(v)\{B\}\ =\ v\;\bigtriangledown{g}(l+1)-l{g}(l)
             +\langle v \rangle{g}(l) , \quad l \in \Z_v,
\end{equation}
for a function~$g = g_{v,B}$ satisfying
\begin{equation}\label{iSEt}
   \sup_{l\geq -\lfloor v \rfloor}|{g}(l+1)|\ \le\ \min\Bigl\{1,\frac{1}{\sqrt{v}}\Bigr\}; \qquad
   \sup_{l\geq -\lfloor v \rfloor}|\bigtriangledown{g}(l+1)| \ \le\ \frac{1}{v};\qquad
    g(l) \ =\ 0, \quad l \le -\lfloor v \rfloor,
 \end{equation}
where  $\langle x \rangle := x -\lfloor x \rfloor$ denotes the fractional part of~$x$;
note also, from~\Ref{SEt} and~\Ref{iSEt}, that 
\eq\label{iSEt-2}
    \sup_l |l{g}(l)| \le 3.
\en

Replacing~$l$ in~\Ref{SEt} by an integer valued random variable~$W$ then shows that, for
any $B \subset \Z_v$, 
\eqa
   \lefteqn{|\bP[W \in B] - \widehat{{\rm Po}}(v)\{B\}|}\non\\ 
   &&\Le \sup_{g\in\gg_v}
     |\bE\{v\bigtriangledown{g}(W+1)-W{g}(W) +\langle v \rangle{g}(W)\}| 
           + \bP[W < -\lfloor v\rfloor], \label{SEt-2}
\ena
where~$\gg_v$ denotes the set of functions $g\colon\,\bZ\to\re$ satisfying \Ref{iSEt}
and~\Ref{iSEt-2}.
Hence, replacing~$W$ by~$Z_n$ and $v$ by~$nv_c$ in~\Ref{SEt-2}, and comparing the expectation 
with~\Ref{DF} expressed using Lemma~\ref{generator}, the required approximation in total
variation can be deduced; for this part of the argument,
we need in particular to show that, in equilibrium, 
\eq\label{delta-2-bnd}
   |\bE\{\bigtriangledown{g}(Z_n+1)-\bigtriangledown{g}(Z_n)\}|
   \Eq |\bE\{\bigtriangledown^2{g}(Z_n+1)\}| \Eq O(n^{-3/2}),
\en
and also that ${\mathbb E} |E_n(g,Z_n)|
= O(n^{-\a/2})$ for any $g\in\gg_{nv_c}$.  The bound~\Ref{delta-2-bnd} follows from Corollary~\ref{lema2} in 
Section~\ref{one-shift}, and the latter estimate, which also uses~\Ref{delta-2-bnd}, is the substance of 
Section~\ref{main}.

\subsection{Assumptions}
We make the following assumptions on the functions ${\lambda}_{j}$.
The first ensures that the deterministic differential equations have a unique
equilibrium, which is sufficiently strongly attracting. 
\\[0.7ex] 
{\bf \small A1:} There exists a unique $c$ satisfying $F(c)=0$; furthermore, $F'(c) < 0$
    and, for any $\eta>0$, $\mu_{\eta}:= \inf_{|z-c|\geq {\eta}}|F(z)|>0$. 
\\[0.7ex] 
The next assumption controls the global behaviour of the transition functions~$\l_j$.
\\[0.4ex]
{\bf \small A2:} 
  (a)  For each $j\in {\mathbb Z}\setminus \{0,\}$, there exists $c_j\ge0$ such that 
    \begin{equation} \label{lamdaj}
      \lambda_j(z)\leq c_j(1+|z-c|), \qquad z\in \mathbb R,
    \end{equation} 
    where the~$c_j$ are such that, for some $0 < \a \le 1$,
    $$
       \sum_{j \in \mathbb Z \setminus \{0\}}|j|^{2+\alpha}c_{j}<\infty.
    $$
\qquad  (b)  For some $\lambda^0>0$ ,
    $$
      {\lambda}_{1}(z)\ \geq\ 2 \lambda^0,\qquad z\in \mathbb R.
    $$
The moment condition on the~$c_j$ in Assumption~A2\,(a) plays the same r\^ole as
the analogous moment condition in the Lyapounov central limit theorem.  Under
this assumption, the ideal rate of convergence in the usual central limit
approximation is the rate~$O(n^{-\a/2})$ that we establish for our total
variation approximation.
Assumption A2\,(b) is important for establishing the smoothness of the equilibrium
distribution~$\Pi_n$.  If, for instance, all jump sizes were multiples of~$2$, the
approximation that we are concerned with would not be accurate in total variation.
\\[0.7ex] 
We also require some assumptions concerning the local properties of the functions~$\l_j$
near~$c$.\\[0.4ex]
{\bf \small A3:}  
  (a) There exist $\varepsilon>0$ and $0 < \delta \le 1$ and a set $J \subset 
     {\mathbb Z}\setminus \{0\}$ such that
     \eqs
       \inf_{|z-c|\leq \delta}\lambda_j(z)\ \geq\ \varepsilon \lambda_j(c)\ > \ 0,\ \ \ j\in J;\\
       \lambda_j(z)\Eq0 \quad  \mbox{for all}\quad   |z-c|\ \leq\ \delta,\ \ \ j\notin J.
     \ens 
\qquad  (b)\ For each $ j\in J$, ${\lambda}_{j}$  is of class $C^2$ on~$|z-c| \le \d$.  
\\[0.4ex] 
Assumptions A2\,(a) and A3 imply in particular that the series  
$\sum_{j \in \mathbb Z \setminus \{0\}}j\lambda_j(z)$ and 
$\sum_{j \in \mathbb Z \setminus \{0\}}j^2\lambda_j(z)$  are uniformly convergent 
on~$|z-c| \le \d$,  and that their sums, 
$F$ and ${\sigma}^2$ respectively, are continuous there. They also imply that
$$
  \sum_{j\in \mathbb Z\setminus \{0\}}|j|n \lambda_j(i/n)
     \Eq O(|i|),\ \ \ |i|\to \infty,
$$
so that the process $Z_n$ is a.s.\ non-explosive, in view of Hamza and Klebaner~(1995, Corollary~2.1).
\\[0.7ex] 
The remaining assumptions control the derivatives of the functions~$\l_j$ near~$c$.\\[0.4ex]
{\bf \small A4:} For $\delta$ as in A2,  
$$
   L_1 \ :=\ 
    \sup_{j\in J}\frac{\|{{\lambda}_j^{\prime}}\|_{\delta}}{\lambda_j(c)}
      \ <\ \infty,
$$
where   $\|f\|_{\delta}:=\sup_{|z-c|\leq \delta}|f(z)|$. \\[0.7ex] 
This assumption implies in particular, in view of Assumptions A2--A3, that the series 
$\sum_{j \in \mathbb Z \setminus \{0\}}j\lambda^{\prime}_j(z)$ and 
$\sum_{j \in \mathbb Z \setminus \{0\}}j^2\lambda^{\prime}_j(z)$ are uniformly 
convergent on $|z-c|\leq \delta$,
that their sums are 
$F^{\prime}$ and~$(\s^2)^{\prime}$  respectively, and that
$F$ and $\s^2$  are of class $C^1$ on $|z-c|\leq \delta$.  \\[0.7ex]
{\bf \small A5:} For $\delta$ as in A2, 
$$
   L_2\ :=\ 
   \sup_{j\in J} \frac{\|{{\lambda }_j^{\prime \prime}}\|_{\delta}}{|j|\lambda_j(c)}
        \ <\ \infty.
$$
This assumption implies, in view of A2--A3, that the series 
$\sum_{j \in \mathbb Z \setminus \{0\}}j\lambda^{\prime \prime}_j(z)$ 
is uniformly convergent on $|z-c|\leq \delta$, its sum is
$F^{\prime \prime}$, 
and~$F$ is of class $C^2$ on $|z-c|\leq \delta.$ \\[0.7ex]

\ignore{
  In the example, {\bf \small Assumption 1} is satisfied if $d>\sum_{j\geq 1}jb_j,$ with
  $c=a/(d-\sum_{j\geq 1}jb_j).$ {\bf \small Assumption 2} is satisfied with $\lambda^0=a/2$ and
  $c_1=\max\{b_1, a+b_1c\},$ and, for instance, for $\delta=c/2$ and $\varepsilon=1/2,$  for $c_j=b_j
  \max\{1,c\},$ with $j\geq 2,$ and $c_{-1}=d \max\{1,c\}.$ For any $\delta>0,$ we have
  $\mu_{\delta}=\delta(d-\sum_{j\geq 1}jb_j).$  {\bf \small Assumption 3} is satisfied 
  if $\sum_{j\geq 1}|j|^{2+\alpha}b_j< \infty,$ in which case the other assumptions follow immediately.
}

Our arguments make frequent use of the following theorem, which is a restatement in our 
setting of Hamza and Klebaner~(1995, Theorem~3.2), and justifies~\Ref{DF}.

\begin{theorem}\label{HK95} 
Suppose that $Z_{n}$ is non-explosive. Let $h$ be a function satisfying
\eq\label{HK-1}
   (|{\mathcal A}_n|h)(i)\ :=\ 
    \sum_{j\in \mathbb Z\setminus \{0\}}{{\lambda}_{j}}\Big(\frac{i}{n}\Big)|h(i+j)-h(i)|
    \ \leq\ c_{n,h}(1\vee |h(i)|),\;\;\;|i|\rightarrow \infty,
\en 
for some $c_{n,h} < \infty$.  Then,
if $h(Z_n(0))$ is integrable,  so is  $h(Z_n(t))$ for any $t\ge0$; moreover, 
$$
   h(Z_n(t))-h(Z_n(0))-\int_0^t({\mathcal A}_nh)(Z_n(s)))ds
$$ 
is a martingale, and {\it Dynkin's formula} holds:  
\eq\label{HK-2}
  {\mathbb E}[h(Z_{n}(t))-h(Z_n(0))]\ =\ \int_0^t{\mathbb E}({\mathcal A}_n h)(Z_{n}(s))ds.
\en
\end{theorem}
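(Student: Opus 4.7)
The plan is the classical localization-plus-Gronwall argument used by Hamza and Klebaner to establish their Theorem~3.2, reformulated in the present notation. Introduce the exit times $\tau_k := \inf\{t \ge 0 : |Z_n(t)| \ge k\}$; by the assumed non-explosiveness of $Z_n$, $\tau_k \uparrow \infty$ almost surely. For each fixed $k$ the stopped process $Z_n(\cdot \wedge \tau_k)$ takes values in a finite set, so $h$ restricted to those states is bounded and ${\mathcal A}_n$ acts as a bounded operator. The standard jump construction of a pure jump Markov chain then yields routinely that
$$
  M_t^{(k)} \ :=\ h(Z_n(t\wedge\tau_k)) - h(Z_n(0)) - \int_0^{t\wedge\tau_k}({\mathcal A}_n h)(Z_n(s))\,ds
$$
is a mean-zero martingale.

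Taking expectations of $|M_t^{(k)}|$ and invoking \Ref{HK-1} in the rearranged form $(|{\mathcal A}_n|h)(i) \le c_{n,h}(1 + |h(i)|)$ (the constant $c_{n,h}$ may be enlarged to absorb the behaviour at any small $|i|$, since the $\lambda_j$ are locally bounded), one obtains
$$
  \mathbb E|h(Z_n(t\wedge\tau_k))| \ \le\ \mathbb E|h(Z_n(0))| + c_{n,h}\int_0^t\bigl(1 + \mathbb E|h(Z_n(s\wedge\tau_k))|\bigr)\,ds.
$$
Gronwall's inequality then delivers a bound $\mathbb E|h(Z_n(t\wedge\tau_k))| \le K(t) := (\mathbb E|h(Z_n(0))| + c_{n,h}t)\exp(c_{n,h}t)$ that is uniform in $k$. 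Since $h(Z_n(t\wedge\tau_k)) \to h(Z_n(t))$ a.s.\ by non-explosiveness, Fatou's lemma then yields $\mathbb E|h(Z_n(t))| \le K(t) < \infty$, which is the integrability claim.

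For Dynkin's formula \Ref{HK-2}, it remains to pass to the limit $k\to\infty$ in the identity $\mathbb E\,h(Z_n(t\wedge\tau_k)) = \mathbb E\,h(Z_n(0)) + \mathbb E\int_0^{t\wedge\tau_k}({\mathcal A}_n h)(Z_n(s))\,ds$. The integral term converges by dominated convergence, since the growth bound and the Gronwall estimate together give $\mathbb E\int_0^t |({\mathcal A}_n h)(Z_n(s))|\,ds < \infty$. The main technical obstacle is the $L^1$-convergence of the boundary term, equivalently the uniform integrability of the family $\{h(Z_n(t\wedge\tau_k))\}_k$. Decomposing $\mathbb E\,h(Z_n(t\wedge\tau_k)) = \mathbb E[h(Z_n(t))\mathbf{1}_{\tau_k > t}] + \mathbb E[h(Z_n(\tau_k))\mathbf{1}_{\tau_k \le t}]$, the first piece converges to $\mathbb E\,h(Z_n(t))$ by dominated convergence (using the integrability of $|h(Z_n(t))|$ established above), while the second must be shown to vanish; this cannot be concluded from the $L^1$ Gronwall bound alone and is handled in Hamza and Klebaner by a more refined inspection of the process near the time~$\tau_k$, combined with $\mathbb P[\tau_k \le t] \to 0$. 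Once \Ref{HK-2} is in place, the asserted martingale property follows by applying it on the interval $[s,t]$ conditionally on~${\mathcal F}_s$ and using the Markov property of~$Z_n$.
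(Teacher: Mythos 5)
The paper offers no proof of this theorem: it is introduced explicitly as ``a restatement in our setting of Hamza and Klebaner (1995, Theorem~3.2)'', so the only benchmark is the cited source, and your localization-plus-Gronwall outline is indeed the standard route that source follows. Within that outline, however, one step is wrong as stated and one is missing. The wrong step: the stopped process $Z_n(\cdot\wedge\tau_k)$ does \emph{not} take values in a finite set, because the jumps are unbounded (every $j\in\mathbb Z\setminus\{0\}$ may carry positive rate), so $Z_n(\tau_k)$ can overshoot the level $k$ by an arbitrary amount and $h$ need not be bounded on the range of the stopped process. What is true is that the \emph{pre-jump} states up to time $\tau_k$ lie in $\{|i|<k\}$, where $(|{\mathcal A}_n|h)$ is bounded; integrability of $h(Z_n(t\wedge\tau_k))$ and the martingale property of $M^{(k)}$ must be extracted from \Ref{HK-1} evaluated at the pre-jump state (which controls the expected size of $|h(Z_n(\tau_k))-h(Z_n(\tau_k-))|$), not from finiteness of the state space. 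Relatedly, the Gronwall inequality for $\mathbb E|h(Z_n(t\wedge\tau_k))|$ does not come from ``taking expectations of $|M^{(k)}_t|$''; the clean derivation runs the compensator identity for $|h|$ (or $1\vee|h|$) itself, which also satisfies \Ref{HK-1} because $(|{\mathcal A}_n|\,|h|)\le(|{\mathcal A}_n|h)$.

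The missing step is the one you yourself flag: showing $\mathbb E[\,|h(Z_n(\tau_k))|\mathbf 1_{\{\tau_k\le t\}}]\to0$, equivalently the uniform integrability of $\{h(Z_n(t\wedge\tau_k))\}_k$, which is precisely where the hypothesis \Ref{HK-1} (rather than mere non-explosiveness) earns its keep; deferring it to Hamza and Klebaner leaves the proposal incomplete at the crux. Since the paper itself merely cites the reference, this is a defensible place to stop, but if you want the argument self-contained, note that $g:=1\vee|h|$ satisfies $(|{\mathcal A}_n|g)\le c_{n,h}\,g$, so that $e^{-c_{n,h}s}\,g(Z_n(s\wedge\tau_k))$ is a nonnegative supermartingale, and combine the resulting bounds with the pre-jump overshoot estimate above and $\mathbb P[\tau_k\le t]\to0$ to control the boundary term. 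The final reduction of the martingale property to Dynkin's formula via the Markov property is fine.
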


\setcounter{equation}{0} 
\section{Existence of the equilibrium distribution}\label{equilibrium}
In this section, we prove that~$Z_n$ has an equilibrium  distribution which is suitably
concentrated in the neighbourhood of~$nc$.
  
\begin{theorem}\label{exubj} Under Assumptions A1--A4, for all~$n$  large enough, $Z_n$ has an
equilibrium distribution $\Pi_n$, and
\begin{equation} \label{2ineq}
 \begin{split}
 &{\mathbb E}_{\Pi_n}\{|z_{n}-c| \cdot \charI(|z_{n}-c| > \delta)\}\ =\ O(n^{-1})\\
 &{\mathbb E}_{\Pi_n}\{(z_{n}-c)^2 \cdot \charI(|z_{n}-c|\le \delta)\}\ =\ O(n^{-1}),
 \end{split}
 \end{equation}
 for $\delta$ as in Assumption~A3: here, as before, $z_n := n^{-1}Z_n$. 
\end{theorem}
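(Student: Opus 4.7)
\emph{Proof plan.} The strategy is a Foster--Lyapunov argument driven by the explicit quadratic Lyapunov function $V(i):=(i/n-c)^2$. Since $V$ is quadratic, the identity $V(i+j)-V(i)=2(i/n-c)j/n+j^2/n^2$ is exact, and Assumption~A2(a) ensures absolute convergence of $\sum_j j\l_j(z)$ and $\sum_j j^2\l_j(z)$, so a term-by-term summation yields
$$
 (\mathcal A_n V)(i)\Eq 2(z_n-c)F(z_n)+\frac{1}{n}\s^2(z_n),\qquad z_n:=i/n.
$$

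I would then produce a drift inequality by treating $|z_n-c|\le\d$ and $|z_n-c|>\d$ separately. Near~$c$, Assumption~A4 gives $F\in C^1$ with $F'(c)<0$, so, after shrinking~$\d$ to ensure $F'(z)\le F'(c)/2$ on $|z-c|\le\d$, one has $(z-c)F(z)\le\tfrac12 F'(c)(z-c)^2$; combined with the local boundedness of $\s^2$ this gives $(\mathcal A_nV)(i)\le F'(c)(z_n-c)^2+C_1/n$. Far from~$c$, the global attracting property built into A1 forces $F(z)$ and $z-c$ to have opposite signs while $|F(z)|\ge\mu_\d$, so $(z-c)F(z)\le-\mu_\d|z-c|$; using the A2(a) bound $\s^2(z)\le K'(1+|z-c|)$ and absorbing the linear positive part into the negative drift for $n\ge K'/\mu_\d$ gives $(\mathcal A_nV)(i)\le-\mu_\d|z_n-c|+K'/n$. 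The two ranges combine into
$$
 (\mathcal A_n V)(i)\ \le\ -f(i)+\frac{C_2}{n},\qquad f(i):=|F'(c)|(z_n-c)^2\charI(|z_n-c|\le\d)+\mu_\d|z_n-c|\charI(|z_n-c|>\d),
$$
valid for all $n$ sufficiently large, with $f\ge 0$.

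Outside the compact set $\{|z_n-c|\le\d\}$ the right-hand side is at most $-\mu_\d\d/2$, so Foster's criterion---together with the non-explosiveness of $Z_n$ (Hamza--Klebaner Corollary~2.1, as noted after~A3) and the assumed irreducibility---yields positive recurrence and hence the existence of $\Pi_n$. To obtain the two bounds in~\Ref{2ineq}, I would apply the truncation $V_M:=V\wedge M$, which is bounded, so $\bE_{\Pi_n}(\mathcal A_nV_M)=0$ trivially; since $V_M(i+j)-V_M(i)\le V(i+j)-V(i)$ when $V(i)\le M$ while each summand $V_M(i+j)-V_M(i)\le 0$ when $V(i)>M$, one has $(\mathcal A_nV_M)(i)\le(\mathcal A_nV)(i)$ on $\{V\le M\}$ and $(\mathcal A_nV_M)(i)\le 0$ on $\{V>M\}$; plugging in the pointwise drift bound then yields $\bE_{\Pi_n}[f\charI(V\le M)]\le C_2/n$, and monotone convergence as $M\to\infty$ delivers $\bE_{\Pi_n}f\le C_2/n$, which is exactly~\Ref{2ineq}. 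The main technical point is precisely this passage to stationarity for the unbounded~$V$; the truncation just sketched is the cleanest route, though the moment form of the Meyn--Tweedie Lyapunov theorem could be used instead.
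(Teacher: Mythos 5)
Your route is genuinely different from the paper's and, in outline, it works. The paper takes the Lyapunov function $V(z)=|z-c|^{2+\alpha}$, applies Dynkin's formula started from a \emph{point mass} (thereby avoiding any integrability assumption on $V$ under the as-yet-unknown $\Pi_n$), and passes to time averages as $t\to\infty$ to obtain simultaneously the existence of $\Pi_n$ (via Ethier--Kurtz) and the tail bound $\bP_{\Pi_n}(|z_n-c|\ge y)\le C/(n\mu_\d y^{1+\alpha})$, which integrates to the first inequality of \Ref{2ineq}; it then constructs a second, \emph{bounded} $C^2$ function $\tV$ with $F\tV'=-(z-c)^2$ near $c$ to obtain the second inequality in equilibrium. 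Your single quadratic does both jobs at once: the increment identity $V(i+j)-V(i)=2(z_n-c)j/n+j^2/n^2$ is exact, so no Taylor remainders arise and only $\sum_j j^2c_j<\infty$ (implied by A2(a)) is needed; and the truncation $V\wedge M$ plays the role of the paper's bounded $\tV$ while sidestepping the circularity of applying Dynkin's formula in equilibrium to an unbounded function. Obtaining existence from Foster's criterion rather than from time-averaged occupation bounds is also legitimate, given the irreducibility assumed in the set-up and the non-explosiveness recorded after Assumption A3.

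One step, however, is asserted as trivial when it is not. The identity $\bE_{\Pi_n}(\mathcal A_nV_M)(Z_n)=0$ does not follow from boundedness of $V_M$ alone: the total jump rate out of state $i$ is $n\sum_j\lambda_j(i/n)\le n(1+|z_n-c|)\sum_jc_j$, which grows linearly in $|i|$, so a bounded function need not satisfy condition \Ref{HK-1}. You must argue, exactly as the paper does for $\tilde h_V$ in Lemma~\ref{lem-3.3}, that when $V(i)>M$ only jumps with $|j|>|i-nc|-n\sqrt M$ can change $V_M$, and then invoke the tail decay $\sum_{|j|\ge R}c_j=O(R^{-(2+\alpha)})$ from A2(a) to conclude that $(|\mathcal A_n|V_M)$ is bounded; only then does Theorem~\ref{HK95} apply with initial law $\Pi_n$. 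Two smaller repairs: (i) the bound $(z-c)F(z)\le-\mu_\d|z-c|$ for $|z-c|>\d$ uses that $F(z)$ and $z-c$ have opposite signs globally, which A1 does not state explicitly (the paper makes the same implicit use in \Ref{FVdash}, but you should flag it); (ii) after shrinking $\d$ to some $\d'$ to secure $F'(z)\le F'(c)/2$, your conclusions are stated for $\d'$, whereas the theorem concerns the $\d$ of A3 --- this is patched by
$\bE_{\Pi_n}\{(z_n-c)^2\charI(|z_n-c|\le\d)\}\le \bE_{\Pi_n}\{(z_n-c)^2\charI(|z_n-c|\le\d')\}+\d\,\bE_{\Pi_n}\{|z_n-c|\charI(|z_n-c|>\d')\}$, both terms being $O(n^{-1})$ by what you have proved.
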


\begin{proof}  The argument is based on suitable choices of Lyapounov functions.
Consider the twice continuously differentiable function $V\colon \mathbb R \to \mathbb R_+$ defined by
$V(z):=|z-c|^{2+\alpha},$ for the $\alpha$ in Assumption~A2\,(a). 
Since $V(c)=0$ and $V(z)>0$ for any $z\neq c$, and because
\eq\label{FVdash}
   F(z)V^{\prime}(z) \Eq -|F(z)|(2+\alpha)|z-c|^{1+\alpha}\ <\ 0\ \ \ \mbox{for any}\ z\neq c,
\en
while $F(c)V^{\prime}(c)=0$, we conclude that $V$ is a Lyapounov function 
guaranteeing the asymptotic stability of the constant solution $c$ of the equation 
$\dot{x}=F(x)$.  We now use it to show the existence of~$\Pi_n$.
 
\begin{lemma} 
\label{lem-3.2}
Under the assumptions of Theorem~\ref{exubj},  the function 
$h_V(i):=V\big(\frac i n\big)=\big|\frac i n -c\big|^{2+\alpha}$  fulfils the
conditions of Theorem~\ref{HK95} with respect to the initial distribution
$\delta_l$, the point mass at~$l$, for any $l\in \mathbb Z$.
\end{lemma}
  
 \begin{proof}
 Checking~\Ref{HK-1}, we use Taylor approximation and Assumption~A2\,(a) to give
\begin{eqnarray}
  (|{\mathcal A}_n|\, {h_V})(i)
& \leq& (2+\alpha)|z-c|^{1+\alpha}\sum_{j\in \mathbb Z \setminus \{0\}}|j|\ c_j(1+|z-c|) \non\\
&& \qquad\mbox{}+\frac{(2+\alpha)(1+\alpha)|z-c|^{\alpha}}{2n}
      \sum_{j\in \mathbb Z \setminus \{0\}} j^2c_j(1+|z-c|) \label{|an|} \\
&& \qquad\mbox{} +\frac{(2+\alpha)(1+\alpha)}{2n^{1+\alpha}}
       \sum_{j\in \mathbb Z \setminus \{0\}}|j|^{2+\alpha}c_j(1+|z-c|),
\end{eqnarray}
where we write $z := i/n$. For $|z-c|<\delta\le 1$, the estimate in~\Ref{|an|} is 
uniformly bounded by
$$
   C_{1n}\ :=\ 2(2+\alpha)\Big\{\sum_j |j| c_j+\frac{(1+\alpha)}{2n}\sum_j j^2
       c_j+\frac{(1+\alpha)}{2n^{1+\alpha}}\sum_j |j|^{2+\alpha} c_j\Big\} \ <\ \infty,
$$
because of Assumption~A2\,(a); for $|z-c|\geq \delta$, we have the bound  
$$
   (|{\mathcal A}_n|\,{h_V})(i)\ \leq\  C_{1n}|z-c|^{2+\alpha} \Eq C_{1n}\, {h_V}(i),
$$
as required.
\end{proof}

The above lemma allows us to apply Dynkin's formula to the function $h_V$. Using Taylor
approximation as for~\eqref{|an|}, but now noting that the first order term
$$
   \sum_{j\in \mathbb Z\setminus \{0\}}\lambda_j(z) j V^{\prime}(z) \Eq F(z)V'(z)
$$
can be evaluated using~\Ref{FVdash}, it follows that
\begin{equation} \label{pti2}
 ({\mathcal A}_n \,h_V)(i)
  \ \leq\ -|F(z)|(2+\alpha)|z-c|^{1+\alpha} + n^{-1}C_2 \Le n^{-1}C_2 
\end{equation} 
on $|z-c|\le\delta$, for 
$$
   C_2 \Eq (2+\alpha) (1+\alpha)\Bigl\{\sum_j j^2 c_j+ \sum_j |j|^{2+\alpha} c_j\Bigr\}
   \ <\ \infty,
$$
where, once again,  $z:= i/n$. 
On $|z-c|> \delta$ and under Assumption A2\,(a),  we have
\begin{eqnarray}  \label{celneg}
 ({\mathcal A}_n \,h_V)(i)  
  &\le& -|F(z)|(2+\alpha)|z-c|^{1+\alpha}\ \non\\
  &&\quad\quad \Big[1-  \frac{(1+\alpha)}{2n|F(z)|\cdot |z-c|}\sum_{j\in \mathbb Z \setminus \{0\}}j^2c_j(1+|z-c|)\non\\
  &&\quad\mbox{}\quad\quad -\frac{(1+\alpha)}{2n^{1+\alpha}|F(z)|\cdot |z-c|^{1+\alpha}}
    \sum_{j\in \mathbb Z \setminus \{0\}}|j|^{2+\alpha}c_j(1+|z-c|)\Big] \non\\  
  &\le& -\frac {\mu_{\delta}(2+\alpha)} 2 |z-c|^{1+\alpha}\ 
     \leq\ - \mu_{\delta} |z-c|^{1+\alpha},  
\end{eqnarray}
 as long as $n$ is large enough that $n\d \ge 1$ and
$$
   \frac{(1+\d)(1+\alpha)}{n\d\mu_{\delta}}\sum_{j\in \mathbb Z \setminus \{0\}}|j|^{2+\alpha}c_j
     \ <\ \frac 1 2.
$$
Dynkin's formula~\Ref{HK-2} then implies, for such $n$, that
\begin{eqnarray*} 
  0&\leq &{\mathbb E}_{i} h_V(Z_{n}(t)) 
     \Eq V(z) + \int_0^t{\mathbb E}_{i} ({\mathcal A}_n \,h_V)(Z_{n}(s))\,ds\\
  &\leq & V(z) + \int_0^t\frac{C_2}{n}{\mathbb P}_{i}(|n^{-1}Z_{n}(s)-c|< \delta)\, ds\\
  &&\mbox{}\quad -\mu_{\delta} \int_0^t{\mathbb E}_{i}\{|n^{-1}Z_{n}(s)-c|^{1+\alpha} \cdot 
     \charI{(|n^{-1}Z_{n}(s)-c|\geq \delta)}\}\,ds,
\end{eqnarray*}
for any $t>0$ and $i \in  \mathbb Z$, where ${\mathbb P}_{i}$ and~${\mathbb E}_{i}$ denote
probability and expectation conditional on $Z_n(0) = i$. It now follows, for any $y\geq \delta$, that 
\begin{eqnarray} \label{totpti2}
  \lefteqn{\frac {\mu_{\delta}\, y^{1+\alpha}} {t} \int_0^t{\mathbb P}_{i}(|{n^{-1}Z_{n}(s)}-c|\geq y)\,ds}\non\\
  &&\Le \frac {\mu_{\delta}}{t} 
     \int_0^t{\mathbb E}_{i}\{|{n^{-1}Z_{n}(s)}-c|^{1+\alpha} \cdot \charI{(|{n^{-1}Z_{n}(s)}-c|\geq y)}\}\,ds\non\\
  &&\Le \frac 1 t V({z})+\frac{C_2}{nt} \int_0^t {\mathbb P}_{i}(|{n^{-1}Z_{n}(s)}-c|< \delta)\,ds,
\end{eqnarray}
and, by letting $t\to \infty$, it follows that
$$
  \limsup_{t\to \infty}\frac 1 t\int_0^t{\mathbb P}_{i}(|{n^{-1}Z_{n}(s)}-c|\geq y)\,ds
    \ \leq\ \frac{C_2}{n\mu_\d\, y^{1+\alpha}}.
$$
This implies that a limiting equilibrium distribution $\Pi_n$ for $Z_n$ exists, 
see for instance Ethier and Kurtz (1986, Theorem~9.3, Chapter~4),
and that, writing $z_n := n^{-1}Z_n$, we have
$$
   {\mathbb P}_{\Pi_n}(|z_{n}-c|\geq y)\ \leq\ 
     \frac{C_2}{n\mu_{\delta}\, y^{1+\alpha}}, 
$$ for any $y\geq \delta$. Furthermore, 
\eqs
{\mathbb E}_{\Pi_n}\{|z_{n}-c| \cdot \charI(|z_{n}-c|\geq \delta)\} &=&
     \int_{\delta}^\infty{\mathbb P}_{\Pi_n}(|z_{n}-c|\geq y)\,dy \\
    &\leq& \int_{\delta}^\infty \frac{C_2}{n\mu_{\delta}\, y^{1+\alpha}}\,dy \Eq O(n^{-1}), 
\ens
proving the first inequality in~\eqref{2ineq}.\\

For the second inequality in~\eqref{2ineq},  we define a  function $\tilde{V}\colon \mathbb R \to
\mathbb R$, which is of class $C^2(\mathbb R)$, is bounded and has uniformly
bounded first and second derivatives on $\mathbb R$,  fulfils the conditions of Theorem~\ref{HK95}, 
and satisfies $F(z)\tilde{V}^{\prime}(z)=-(z-c)^2$ on $|z-c|\leq \delta$. 

In view of the latter property, we begin by letting $v\colon [c-\delta,c+\delta]\to \mathbb R_+$ be 
the function defined by
$$
 v(z)\ :=\ \int_c^z \frac{-(x-c)^2}{F(x)}\,dx,
$$ 
with $v(c)=0$. Note that $v$ is well defined, since  $F^{\prime}(x)<0$ on a  small
enough neighborhood of~$c$, by Assumptions A1 and~A4, and that $v(z)>0$ for 
any $z\neq c$. Furthermore, in view of Assumptions A1 and~A4,  
$$
   v^{\prime }(z) \Eq -\frac{(z-c)^2}{F(z)} \ \ {\rm and}\ \ v^{\prime
    \prime}(z) \Eq \frac{(z-c)^2F^{\prime}(z)-2(z-c)F(z)}{F^2(z)}
$$ 
exist and are continuous on $|z-c|\leq \delta$,
since $|F(z)|>0$ for $z\neq c$, $F(z)\sim F^{\prime}(c)(z-c)$ for $z\to c$, and $F^{\prime}$ is
continuous.  In particular, we have 
\eq\label{Rk-1}
  v^{\prime }(c) \Eq \lim_{z\to c}v^{\prime}(z)=0 \ \ {\rm and}\ \
    v^{\prime \prime}(c) \Eq \lim_{z\to c}v^{\prime \prime}(z) \Eq -\frac 1{F^{\prime}(c)}\ >\ 0.
\en 

Now define the function $\tilde{V}$ to be identical with~$v$ on $|z-c| \le \d$, and 
continued
in $z \le c-\d$ and in $z \ge c+\d$ in such a way that the function is still~$C_2$, 
and takes the same fixed value everywhere
on $|z-c| \ge 2\d$.
\ignore{
as follows:
$$
  \tilde{V}(z)\ :=\  
  \begin{cases} 
    v(c-\delta)-\delta+|z-c| + [v^{\prime}(c-\delta)+1]\sin(z-c+\delta)\\
    \hspace{1.5in}\mbox{}+\frac 1 2 v^{\prime \prime}(c-\delta)\sin^2(z-c+\delta), &{\rm if}\ z<c-\delta; \\
    v(z), &{\rm if}\ |z-c|\leq \delta; \\
    v(c+\delta)-\delta+|z-c|+[v^{\prime}(c+\delta)-1]\sin(z-c-\delta)\\
    \hspace{1.5in}\mbox{}+\frac 1 2 v^{\prime \prime}(c+\delta)\sin^2(z-c-\delta), &{\rm if}\ z>c+\delta. 
  \end{cases}
$$
Note that the function $\tilde{V}$ is of class $C^2(\mathbb R)$, and that 
$$
  |\tilde{V}^{\prime}(z)|\leq C_3 \ \ {\rm and}\ \ |\tilde{V}^{\prime \prime}(z)|\leq C_3,
$$  
for any $z \in \mathbb R$, where
$$
  C_3:=\max\{2+|v^{\prime}(c-\delta)|+3|v^{\prime \prime}(c-\delta)|,\sup_{|z-c|\leq \delta}(|v^{\prime }(z)|+|v^{\prime
 \prime}(z)|),2+|v^{\prime}(c+\delta)|+3|v^{\prime \prime}(c+\delta)|\}.$$
}
Let
\[
   C_3 \ :=\ \max\{\sup_{z\in \mathbb R} \tV(z),\,\sup_{z\in \mathbb R} |\tV'(z)|,\,
            \sup_{z\in \mathbb R} |\tV''(z)|\}.
\]

\begin{lemma}\label{lem-3.3} 
Under the assumptions of Theorem~\ref{exubj}, the function 
$\tilde{h}_V(i):=\tilde{V}\big(\frac i n\big)$ fulfils the
conditions of Theorem~\ref{HK95} with respect to the initial distribution $\Pi_n$.
\end{lemma}
  
 \begin{proof}
 Since $\tilde{h}_V(i)$ is bounded, it follows that $\mathbb E_{\Pi_n}|\tilde{h}_V(Z_n)|<\infty$.
$|{\mathcal A}_n|\,\tilde{h}_V$ is also bounded, since, for $|n^{-1}i-c| \le 4\d$,
by Assumption A2\,(a),
\[
   (|{\mathcal A}_n|\,\tilde{h}_V)(i)
       \Le C_3 \sum_{j\in \mathbb Z\setminus \{0\}} c_j(1+4\d),
\]
while, for $|n^{-1}i-c| > 4\d$,
\eqs
   \lefteqn{(|{\mathcal A}_n|\,\tilde{h}_V)(i)
       \Le C_3 \sum_{j\colon\,|j+i-nc| \le 2n\d} c_j(1+|n^{-1}i-c|)}\\
   &&\Le C_3 \Bigl\{\sum_{j\in \mathbb Z\setminus \{0\}} jc_j\Bigr\} 
        \frac{1+|n^{-1}i-c|}{|i-nc|-2n\d} 
        \Le C_3 \Bigl\{\sum_{j\in \mathbb Z\setminus \{0\}} jc_j\Bigr\}
        \frac{1+4\d}{2n\d}.
\ens
\ignore{
Taylor expansion then yields
\begin{eqnarray*} \
(|{\mathcal A}_n|\,\tilde{h}_V)(i)
   &\leq& C_3\Big(\sum_{j\in \mathbb Z\setminus \{0\}}|j|c_j+\frac{1}{2n}\sum_{j\in \mathbb Z\setminus
       \{0\}}j^2c_j\Big)(1+|n^{-1}i-c|)\\
   &=& O(1\vee |\tilde{h}_V(i)|),
\end{eqnarray*}
by Assumption A2\,(a).
}
\end{proof}

 We now apply Dynkin's formula to $\tilde h_{V}$,  obtaining 
 $$
    0 \Eq \mathbb E_{\Pi_n}\{({\mathcal A}_n\,\tilde h_{V})(Z_n)\}
    \Le \mathbb E_{\Pi_n}\Big\{F(z_n)\tilde{V}^{\prime}(z_n)+
     \sum_{j\in \mathbb Z \setminus \{0\}}\lambda_j(z_n)\frac{j^2}{2n}\, C_3\Big\}.
$$ 
Hence it follows that
\begin{eqnarray*}
  \lefteqn{\mathbb E_{\Pi_n}\{-F(z_n)\tilde{V}^{\prime}(z_n)\cdot\charI(|z_n-c|\le\delta)\}}\\
  &\le& \mathbb E_{\Pi_n}\Big\{F(z_n)\tilde{V}^{\prime}(z_n)\cdot\charI(|z_n-c| > \delta)
     +\sum_{j\in \mathbb Z \setminus \{0\}}\lambda_j(z_n)\frac{j^2}{2n}\, C_3\Big\},
\end{eqnarray*}
whence we obtain
{\allowdisplaybreaks 
\begin{eqnarray*}
  \lefteqn{ \mathbb E_{\Pi_n}\{(z_n-c)^2\cdot \charI(|z_n-c|\le\delta)\}}\\
  &\leq& \mathbb E_{\Pi_n}\{|F(z_n)\tilde{V}^{\prime}(z_n)|\cdot\charI(|z_n-c|> \delta)\}
     + C_3 \,\mathbb E_{\Pi_n}\Big\{\sum_{j\in \mathbb Z \setminus \{0\}}\lambda_j(z_n)\frac{j^2}{2n}\Big\}\\
  &\leq& C_3 \sum_{j\in \mathbb Z\setminus \{0\}}\Big(2|j|+\frac{j^2}{n}\Big)c_j\ 
      \mathbb E_{\Pi_n}\{|z_n-c|\cdot \charI(|z_n-c| > \delta)\}
      + \frac {C_3} {2n} \sup_{|z-c|\le\delta}  \sigma^2(z).
\end{eqnarray*}} 
Using the first inequality in~\eqref{2ineq} and Assumptions A2 and~A3,  we conclude that
$$
  {\mathbb E}_{\Pi_n}\{(z_{n}-c)^2 \cdot \charI(|z_{n}-c|\le \delta)\} \Eq O(n^{-1}),
$$
proving the second inequality in~\eqref{2ineq}. 
\end{proof}

\begin{corollary} \label{prop1} 
Under  Assumptions A1--A4, 
$$
   {\mathbb E}_{\Pi_n}\{|z_{n}-c|\} \Eq  O(n^{-1/2}).
$$
\end{corollary}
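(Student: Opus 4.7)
The plan is to split the expectation according to whether~$|z_n-c|$ is large or small, and then combine the two bounds from Theorem~\ref{exubj}. Write
\[
   \mathbb{E}_{\Pi_n}\{|z_n-c|\} \Eq \mathbb{E}_{\Pi_n}\{|z_n-c|\cdot\charI(|z_n-c|>\delta)\}
       + \mathbb{E}_{\Pi_n}\{|z_n-c|\cdot\charI(|z_n-c|\le\delta)\}.
\]
The first summand is already bounded by the first estimate of~\Ref{2ineq}, giving $O(n^{-1})$, which is certainly $O(n^{-1/2})$.

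For the second summand, I would apply the Cauchy--Schwarz inequality:
\[
   \mathbb{E}_{\Pi_n}\{|z_n-c|\cdot\charI(|z_n-c|\le\delta)\}
    \Le \sqrt{\mathbb{E}_{\Pi_n}\{(z_n-c)^2\cdot\charI(|z_n-c|\le\delta)\}}\cdot
        \sqrt{\mathbb{P}_{\Pi_n}(|z_n-c|\le\delta)}.
\]
The first factor is $O(n^{-1/2})$ by the second estimate of~\Ref{2ineq}, and the second factor is trivially at most~$1$, so this summand contributes $O(n^{-1/2})$. Adding the two pieces yields the claim.

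There is no real obstacle here: the work has been done in Theorem~\ref{exubj}, and the corollary is essentially a consequence of combining the tail bound and the truncated second-moment bound through Cauchy--Schwarz. The only mildly delicate point is that the second moment bound is restricted to $|z_n-c|\le\delta$, which is exactly why the splitting over the event $\{|z_n-c|\le\delta\}$ works cleanly.
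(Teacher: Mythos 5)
Your proposal is correct and follows essentially the same route as the paper: the same splitting over the event $\{|z_n-c|\le\delta\}$, the tail term bounded by the first estimate of Theorem~\ref{exubj}, and the Cauchy--Schwarz (H\"older) step applied to the truncated term, with the probability factor bounded by~$1$. Nothing further is needed.
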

 
\begin{proof} 
Using H\"older's inequality, we obtain
\eqs
  \lefteqn{{\mathbb E}\{|z_{n}-c|\}}\\
  &=&{\mathbb E}_{\Pi_n}\{|z_{n}-c| \cdot \charI(|z_{n}-c| > \delta)\}+ 
      {\mathbb E}_{\Pi_n}\{|z_{n}-c|\cdot \charI(|z_{n}-c| \le \delta)\}\\
  &\leq& {\mathbb E}\{|z_{n}-c|\cdot \charI(|z_{n}-c| > \delta)\}+ 
      \sqrt{{\mathbb E}_{\Pi_n}\{(z_{n}-c)^2\cdot \charI(|z_{n}-c| \le \delta)\}}.
\ens
The corollary now follows from Theorem~\ref{exubj}.
\end{proof}

\begin{corollary} \label{prob-n1} 
Under Assumptions A1--A4, for any $0 < \d' \le \d$,
$$
   {\mathbb P}_{\Pi_n}[|z_{n}-c|> \d'] \Eq  O(n^{-1}).
$$
\end{corollary}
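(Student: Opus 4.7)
The plan is to combine the two estimates already established in Theorem~\ref{exubj} via a suitable Markov/Chebyshev split. Specifically, for $\d' \le \d$, I would write
\[
 \mathbb{P}_{\Pi_n}[|z_n-c|>\d'] \;=\; \mathbb{P}_{\Pi_n}[\d' < |z_n-c| \le \d] \;+\; \mathbb{P}_{\Pi_n}[|z_n-c|>\d],
\]
and bound each term separately by $O(n^{-1})$ using one of the two inequalities in \eqref{2ineq}.

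For the ``far'' part, Markov's inequality applied to $|z_n-c|\cdot\charI(|z_n-c|>\d)$ gives
\[
 \mathbb{P}_{\Pi_n}[|z_n-c|>\d] \;\le\; \frac{1}{\d}\,\mathbb{E}_{\Pi_n}\{|z_n-c|\cdot\charI(|z_n-c|>\d)\} \;=\; O(n^{-1}),
\]
directly from the first line of \eqref{2ineq}. For the ``intermediate'' part, Chebyshev's inequality applied on the event $\{|z_n-c|\le \d\}$ gives
\[
 \mathbb{P}_{\Pi_n}[\d' < |z_n-c|\le \d] \;\le\; \frac{1}{(\d')^2}\,\mathbb{E}_{\Pi_n}\{(z_n-c)^2\cdot\charI(|z_n-c|\le \d)\} \;=\; O(n^{-1}),
\]
from the second line of \eqref{2ineq}. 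Summing the two bounds yields the claim.

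There is no real obstacle here: the corollary is a direct consequence of the two moment estimates proved in Theorem~\ref{exubj}, with the only point worth noting being that one needs the quadratic-in-$(z_n-c)$ estimate (not just the linear one) in order to handle $\d' < \d$, since on the event $\{\d' < |z_n-c| \le \d\}$ the linear bound would only give $O(n^{-1})$ after dividing by $\d'$, which is still sufficient but the quadratic bound makes the split cleanest. The constant implicit in $O(n^{-1})$ depends on $\d'$, as is to be expected.
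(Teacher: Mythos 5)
Your proof is correct and follows essentially the same route as the paper: Markov's inequality with the first estimate of \eqref{2ineq} on the event $\{|z_n-c|>\d\}$, and Chebyshev's inequality with the second estimate on the intermediate event (the paper phrases the latter as $\mathbb{P}_{\Pi_n}[|z_n-c|\charI(|z_n-c|\le\d)>\d'/2]$, a purely cosmetic difference). Only your closing aside is slightly off --- the linear estimate in \eqref{2ineq} is restricted to $\{|z_n-c|>\d\}$ and so could not handle the intermediate region at all, which is precisely why the quadratic estimate is needed --- but this does not affect the proof itself.
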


\begin{proof} 
It follows from Chebyshev's inequality and Theorem~\ref{exubj} that
$$
   {\mathbb P}_{\Pi_n}[|z_{n}-c|I[|z_n-c| \le \d] > \d'/2] \Le
     4{\mathbb E}_{\Pi_n}\{|z_{n}-c|^2I[|z_n-c| \le \d]\}/(\d')^2 \Eq O(n^{-1}),
$$
and that
$$
   {\mathbb P}_{\Pi_n}[|z_{n}-c| > \d] \Le {\mathbb E}_{\Pi_n}\{|z_{n}-c|I[|z_n-c| > \d]\}/\d
      \Eq O(n^{-1}),
$$
from which the corollary follows.
\end{proof}

\section{The distance between $\Pi_n$ and its unit translation}\label{one-shift}
\setcounter{equation}{0}
\ignore{ 
 If the equilibrium distribution $\Pi_n,$ suitably translated, is indeed $O(1/\sqrt{n})$ close to a
 Poisson distribution with parameter $n\rho,$ say, then its unit translate is correspondingly close to
 ${\rm Po}(n\rho)$  translated by 1. Then, since the total-variation distance between ${\rm Po}(n\rho)$ 
and ${\rm Po}(n\rho)+1$ is of order $O(1/\sqrt{n}),$ the same has to be true of the total-variation 
distance between $\Pi_n$ and its unit translate, where the \emph{distance in total variation} between 
two probability measures $P$ and $Q$ on ${\mathbb Z}$
is defined as:
 $$d_{TV}\{P,Q\}:=\sup_{A \subset \mathbb Z}|P(A)-Q(A)|.$$ However, as illustrated in Barbour and 
Xia~(1999), Barbour and Cekanavicius~(2002) and R\"ollin~(2005), it is extremely useful to be able 
to establish this latter fact in advance, in order to prove the translated Poisson approximation 
heorem, using Stein's method, in the same way that proving a concentration inequality is a useful 
prerequisite for deriving Berry-Essen approximations in the central limit theorem, see
Chen and Shao~(2003). In this section, we establish such a bound.
}
 
A key step in the argument leading to our approximation is to establish that the equilibrium 
distribution~$\Pi_n$ of~$Z_n$ is sufficiently
smooth.  In order to do so, we first need to prove an auxiliary result, showing that, 
if the process~$Z_n$ starts near enough to~$nc$,
then it remains close to~$nc$ with high probability over any finite time interval.
This is the substance of the following lemma.

\begin{lemma}\label{lema3}
Under Assumptions A1--A4, for any $0 < \h \le \d$, there exists a constant $K_{U,\h}<\infty$ such that
$$
   \mathbb P[\sup_{t\in [0,U]}|Z_n(t)-nc|> n\h  \mid Z_n(0) = i]\ \leq\  n^{-1}K_{U,\h},
$$ 
uniformly in $|i - nc| \le n\h e^{-K_1U}/2$, where $K_1 := \|F'\|_{\d}$.
\end{lemma}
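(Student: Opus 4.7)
The plan is to couple the jump process $Z_n$ with the deterministic trajectory $x(\cdot)$ solving $\dot x = F(x)$, $x(0) = i/n$, and to control the stochastic fluctuations by a stopped-martingale argument. I would write
\[
  M_n(t) := Z_n(t) - Z_n(0) - \int_0^t nF(z_n(s))\,ds,
\]
so that $M_n$ is a local martingale whose predictable quadratic variation is $\langle M_n\rangle(t) = \int_0^t n\sigma^2(z_n(s))\,ds$, and the stochastic evolution of $z_n - x$ is driven by $n^{-1} M_n$.

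First I would handle the deterministic trajectory. Since $F(c) = 0$ and $|F'(z)| \le K_1$ on $|z-c|\le\d$, and since $|x(0) - c| \le \h e^{-K_1 U}/2 \le \d/2$, a short continuity bootstrap via Gronwall's lemma shows that $x(\cdot)$ never leaves $\{|x-c|\le\d\}$ on $[0,U]$ and satisfies $|x(t)-c| \le |x(0)-c|\, e^{K_1 t} \le \h/2$ throughout. Next, introducing $\tau := \inf\{t \ge 0 : |z_n(t) - c| > \h\}$, the probability to be bounded is $\mathbb P_i[\tau \le U]$. For $s < \tau$ both $z_n(s)$ and $x(s)$ lie in $[c-\d, c+\d]$, so $F$ acts with Lipschitz constant $K_1$ on the relevant range, and applying Gronwall's lemma to
\[
  z_n(t) - x(t) = \int_0^t [F(z_n(s)) - F(x(s))]\,ds + n^{-1} M_n(t)
\]
yields $\sup_{t < \tau \wedge U}|z_n(t) - x(t)| \le n^{-1} e^{K_1 U}\sup_{t < \tau \wedge U}|M_n(t)|$.

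The hard part will be handling the jump at the exit time~$\tau$, since a single jump can overshoot $\h$ by an arbitrarily large amount, so Gronwall cannot be applied up to and including $\tau$. The key observation is that the jump $J$ of $Z_n$ at $\tau$ coincides with the jump of $M_n$ there, whence $|J| \le 2\sup_{s \le \tau}|M_n(s)|$; combining this with the Gronwall bound above and the triangle inequality (using $|x(\tau)-c| \le \h/2$ and $|z_n(\tau)-c|>\h$) forces, on the event $\{\tau \le U\}$,
\[
  \sup_{s \le \tau \wedge U}|M_n(s)|\ >\ \frac{n\h}{2(e^{K_1 U}+2)}.
\]
Finally I would close the argument by applying Doob's $L^2$ maximal inequality to the stopped martingale $M_n(\cdot \wedge \tau \wedge U)$, together with the bound $\mathbb E[\langle M_n\rangle(\tau \wedge U)] \le n K_2 U$ (where $K_2 := \sup_{|z-c|\le\d}\sigma^2(z) < \infty$ by Assumption~A2), and invoking Chebyshev's inequality, producing the desired $O(n^{-1})$ estimate with $K_{U,\h}$ an explicit polynomial in $\h^{-1}$, $K_2$, $U$ and $e^{K_1 U}$.
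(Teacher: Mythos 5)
Your proposal is correct and follows essentially the same route as the paper: the Dynkin martingale $M_n(t)=Z_n(t)-Z_n(0)-\int_0^t nF(z_n(s))\,ds$, a Gronwall estimate using $K_1=\|F'\|_\d$ on $|z-c|\le\d$, and a Kolmogorov/Doob maximal inequality on the stopped martingale with expected quadratic variation $O(nU)$. The only (cosmetic) differences are that the paper compares $z_n$ directly with $c$ via $|F(z)|\le K_1|z-c|$ rather than passing through the deterministic trajectory $x(t)$, and it absorbs the overshoot at the exit time by defining the martingale as already stopped at $\t_\h$, so that the jump is automatically contained in $\sup_s|{\mathcal M}_n(s)|$ — your explicit treatment of the exit jump achieves the same thing.
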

 
\begin{proof}
It follows directly from Assumption A2\,(a) that~$h$ defined by $h(j)=j$ satisfies 
condition~\Ref{HK-1}. Fix $Z_n(0) = i$, and define 
\eq\label{tau-def}
   \t_\h \ :=\ \inf\{t\ge0\colon\,|Z_n(t)-nc| > n\h\}.
\en
Then it follows from Theorem~\ref{HK95} that
$$
   {\mathcal M}_n(t)\ :=\ Z_n(t\wedge\t_\h) - i - \int_0^{t\wedge\t_\h} nF(z_n(s))\,ds
$$ 
is a martingale with expectation 0, and with expected quadratic variation no larger than 
\eq\label{QV-2}
   nt \sum_{j \in \bZ\setminus\{0\}}j^2 c_j(1 + \h)
\en
at time~$t$ (see Hamza and Klebaner~(1995, Corollary 3)); here, as earlier, 
$z_n := n^{-1}Z_n$.  Hence we have
\begin{equation*}
 |z_n(t\wedge\t_\h)-c|\ \leq\ \frac 1 n \left\{\sup_{s\in [0,U]}|{\mathcal M}_n(s)|+|i-nc|\right\}
    +\int_0^{t\wedge\t_\h}|F(z_n(s))|\,ds,
\end{equation*}
for any $0\leq t \leq U$, and also, from Assumptions A1--A4, we have 
\begin{equation*}
   |F(z)| 
      \Eq |F(z)-F(c)|\ \leq\ \sup_{|y-c|\leq \d}|F^{\prime}(y)|\, |z-c|.  
\end{equation*}
Hence it follows that
\begin{equation*}
   \int_0^{t\wedge\t_\h}|F(z_n(s))|\,ds\ \leq \ K_1 \int_0^{t\wedge\t_\h}|z_n(s)-c|\,ds.
\end{equation*}  
Gronwall's inequality now implies that
\begin{equation*} 
    |z_n({t\wedge\t_\h})-c|\ \leq\  
       n^{-1}\left\{\sup_{s\in [0,U]}|{\mathcal M}_n(s)|+|i-nc|\right\}e^{K_1 t},
\end{equation*}
for any $0\leq t \leq U$, and so, for $|i - nc| \le n\h e^{-K_1U}/2$,
\eq\label{wedge-bnd}
    \sup_{t\in [0,U]}|z_n(t\wedge\t_\h)-c| 
        \Le \h/2 + n^{-1}\sup_{s\in [0,U]}|{\mathcal M}_n(s)|e^{K_1 U}.
\en
We have thus shown that
\begin{equation}\label{zc}
   \mathbb P[\sup_{t\in [0,U]}|z_n(t)-c|> \h \mid Z_n(0)=i\} \Le
        \mathbb P[\sup_{s\in [0,U]}|{\mathcal M}_n(s)| > ne^{-K_1 U}\h/2 \mid Z_n(0)=i].
\end{equation}
But by Kolomogorov's inequality, from~\Ref{QV-2}, we have
\begin{equation}\label{kolm}
    \mathbb P[\sup_{s\in [0,U]}|{\mathcal M}_n(s)| > ne^{-K_1 U}\h/2 \mid Z_n(0)=i] 
        \Le 4 n^{-1}\h^{-2}e^{2K_1U}  U \sum_{j \in \bZ\setminus\{0\}}j^2 c_j(1 + \h),
\end{equation}
completing the proof.
\end{proof} 

We can now prove the main theorem of this section.

\begin{theorem} \label{thdtvpi} 
Under Assumptions A1--A4, there exists a constant $K>0$ such that
$$
   d_{TV}\{\Pi_{n},\Pi_{n}*\delta_1\}\leq K n^{-1/2},
$$ 
where $\Pi_{n}*\delta_1$ denotes the equilibrium distribution $\Pi_{n}$ of $Z_n$, translated by 1.
\end{theorem}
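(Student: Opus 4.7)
The goal is to prove $d_{TV}(\Pi_n,\Pi_n*\delta_1) \Le K/\sqrt n$. Since $\Pi_n$ is concentrated on a scale~$\sqrt n$ about~$nc$, a unit shift should only affect it at scale~$1/\sqrt n$, and the positive constant lower bound $\lambda_1(z)\ge 2\lambda^0$ from Assumption~A2(b) provides precisely the Poisson-type smoothing ingredient needed to convert this intuition into a total-variation bound. The plan is to use the stationarity of $\Pi_n$ to reduce to a smoothness estimate for the time-$U$ conditional law of~$Z_n$, and then construct a coupling of two chains that exploits the rate-$n\lambda^0$ constant component of the $+1$ jumps.

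Concretely, I would first use invariance $\Pi_n P_U = \Pi_n$ to write
\[
 d_{TV}(\Pi_n,\Pi_n*\delta_1) \Le \int\Pi_n(di)\,d_{TV}\bigl(P_U\delta_i,\,(P_U\delta_i)*\delta_1\bigr),
\]
for a constant $U>0$ to be chosen. The contribution from $|i-nc|>n\delta/2$ is $O(n^{-1})$ by Corollary~\ref{prob-n1}, so only $|i-nc|\le n\delta/2$ is essential. For such $i$, Lemma~\ref{lema3} confines $Z_n$ to $\{|z-c|\le\delta\}$ throughout $[0,U]$ with probability $1-O(n^{-1})$, so Assumptions A3--A4 apply along the whole path. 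Now split the $+1$-jump rate as $n\lambda^0+n(\lambda_1(z)-\lambda^0)$ (both non-negative by A2(b)), drive the constant-rate part by an independent Poisson process $\tilde Y$ of rate $n\lambda^0$ in the Ethier--Kurtz random-time-change representation, and couple two chains $Z_n,Z_n^{\star}$, both starting at~$i$, that share $\tilde Y$ and use the common-minimum coupling of the remaining state-dependent rates, except that $Z_n^{\star}$ performs one extra $+1$ jump at an independent random time $\tau\sim\mathrm{Unif}[0,U]$. By Slivnyak--Mecke calculus applied to $\tilde Y\cup\{\tau\}$, the resulting marginal law of $Z_n^{\star}(U)$ agrees with $(P_U\delta_i)*\delta_1$ up to the Poisson-smoothing error $d_{TV}(\mathrm{Po}(n\lambda^0 U),\mathrm{Po}(n\lambda^0 U)*\delta_1)=O(n^{-1/2})$, while a Gronwall argument based on the Lipschitz bound $\|{\lambda}_j'\|_\delta\le L_1\lambda_j(c)$ of~A4 and the confinement of Lemma~\ref{lema3} shows $\mathbb{P}[Z_n^{\star}(U)\ne Z_n(U)+1]=O(n^{-1/2})$; combining these two estimates bounds the inner TV distance uniformly by $O(n^{-1/2})$, and hence completes the proof.

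The hardest step is the Gronwall-based control of the coupling error. Although the inserted $+1$ jump at time~$\tau$ is small, it instantaneously shifts all subsequent state-dependent rates of $Z_n^{\star}$ relative to $Z_n$, and without care this perturbation can propagate into an $O(1)$ drift between the two chains over the interval $[\tau,U]$. Confinement via Lemma~\ref{lema3} is what ensures that both chains stay in the neighbourhood where~A4 applies uniformly, so that the uniform Lipschitz constant $L_1$ yields a linear differential inequality on the running gap $|Z_n^{\star}(t)-Z_n(t)-1|$; Gronwall then bounds this gap over $[\tau,U]$ by something of order $1/\sqrt n$ with high probability. The positive constant rate $\lambda^0$ of~A2(b) is indispensable: it provides $\Theta(n)$ independent smoothing events during $[0,U]$ whose Poisson smoothing is precisely at scale $1/\sqrt n$, and without it no bound of this order could hold.
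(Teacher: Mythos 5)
Your overall architecture matches the paper's: reduce via stationarity and Corollary~\ref{prob-n1} to the smoothness of ${\mathcal L}(Z_n(U)\mid Z_n(0)=i)$ for $|i-nc|\le n\delta'$, split off a rate-$n\lambda^0$ Poisson component of the $+1$ jumps using A2\,(b), insert one extra point at a uniform time, and pay the Poisson unit-translation price $O(1/\sqrt{n\lambda^0 U})$ for the change in the driving point process. Up to that point your proposal is sound and is essentially the decomposition~\Ref{Poisson-part} of the paper. The gap is in the step you yourself identify as the hardest one: the claim that the common-minimum coupling of the state-dependent jumps, combined with Gronwall, gives $\mathbb{P}[Z_n^{\star}(U)\ne Z_n(U)+1]=O(n^{-1/2})$. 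It does not. After the inserted jump the two chains differ by one unit of state, so for each $j$ the two $j$-jump rates differ by
\[
   n\bigl|\lambda_j\bigl(\tfrac{i+1}{n}\bigr)-\lambda_j\bigl(\tfrac{i}{n}\bigr)\bigr|
   \ \le\ \|\lambda_j'\|_\delta ,
\]
which is $O(1)$ \emph{per unit time}, not $O(1/n)$: the factor $n$ in the rates exactly cancels the $1/n$ gain from the Lipschitz bound. Hence the common-minimum coupling suffers its first uncoupled jump at total rate $\approx\sum_j\|\lambda_j'\|_\delta=\Theta(1)$, and the probability of decoupling on $[\tau,U]$ is bounded away from zero, uniformly in $n$. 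Likewise, the Gronwall inequality on $D(t):=|Z_n^{\star}(t)-Z_n(t)-1|$ reads $\frac{d}{dt}\mathbb{E}D(t)\le C(1+\mathbb{E}D(t))$ with $C=\Theta(1)$, yielding only $\mathbb{E}D(U)\le e^{CU}-1=O(1)$. There is no source of a $n^{-1/2}$ factor anywhere in this pathwise argument.

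The missing idea is that the required $O(n^{-1/2})$ comes from \emph{cancellation} among the $O(n)$ small rate perturbations, which a union-bound-over-decoupling-events coupling cannot see. The paper therefore abandons pathwise coupling at this stage and compares the two conditional path laws by a change of measure: the Radon--Nikodym derivative $\rho_{s^*}(u,X_n)$ is a martingale whose jumps are multiplicative factors $1+O(L_1/(n\varepsilon))$ occurring at total rate $O(n)$, so its expected quadratic variation up to a suitable stopping time is $O(n)\cdot O(n^{-2})\cdot U=O(n^{-1})$ (display~\Ref{QV}), whence $\mathbb{E}_i[1-\rho_{s^*}(U,X_n)]_+=O(n^{-1/2})$ plus the exit probability controlled by Lemma~\ref{lema3}. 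This square-function estimate is exactly the step your proposal lacks; everything else in your outline (including the role of A2\,(b), Lemma~\ref{lema3} for confinement, and A4 for the relative Lipschitz bound) is in agreement with the paper. To repair your argument you would either have to adopt this likelihood-ratio estimate, or construct a genuinely maximal (non-Markovian) coupling realizing the TV bound, which amounts to the same computation.
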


\begin{proof} 
Because we have little {\it a priori\/} information 
about $\Pi_n$, we fix any~$U>0$, and use the stationarity of~$\Pi_n$ to give the inequality
\begin{eqnarray}
  \lefteqn{d_{TV}\{\Pi_{n},\Pi_{n}*\delta_1\}}\non\\
  &\leq& \sum_{i \in {\mathbb Z}} \Pi_{n}(i)\, d_{TV}\{{\mathcal L}(Z_{n}(U) \mid 
    Z_{n}(0)=i),{\mathcal L}(Z_{n}(U)+1 \mid Z_{n}(0)=i)\}, \label{disttv}
\end{eqnarray}
By Corollary~\ref{prob-n1}, we thus have, for any $\d' \le \d$,
\eq\label{bound-1}
  d_{TV}\{\Pi_{n},\Pi_{n}*\delta_1\} \Le D_{1n}(\d') + O(n^{-1}),
\en
where
$$
   D_{1n}(\d') \ :=\ \sum_{i\colon\,|i-nc| \le \d'}\Pi_{n}(i)\, d_{TV}\{{\mathcal L}(Z_{n}(U) \mid 
     Z_{n}(0)=i),{\mathcal L}(Z_{n}(U)+1 \mid Z_{n}(0)=i)\}.
$$
This alters our problem to one of finding a bound of similar form, but now
involving the transition probabilities of the
chain~$Z_n$ over a finite time~$U$, and started in a fixed state~$i$ which is relatively 
close to~$nc$.

We now use the fact that the upward jumps of length~$1$ occur at least as fast as a Poisson
process of rate~$\l^0$, something that will be used to derive the smoothness that we require.
We realize the chain~$Z_n$ with $Z_n(0) = i$ in the form $N_n + X_n$, for the bivariate chain 
$(N_n,X_n)$ having transition rates
$$
\begin{array}{ll}
 (l,m) \rightarrow (l+1,m) & \mbox{at rate}\ n\lambda^0 \\
 (l,m) \rightarrow (l,m+1) 
       & \mbox{at rate}\ n\Bigl[{{\lambda}_{1}}\Bigl(\frac{l+m}{n}\Bigr)-\lambda^0\Bigr]\\
 (l,m) \rightarrow (l,m+j) & \mbox{at rate}\ n{{\lambda}_{j}}\Bigl(\frac{l+m}{n}\Bigr),
     \ \mbox{for any}\; j\in \mathbb Z,\, j\neq 0,1,
\end{array}
$$
and starting at $(0,i)$. This allows us to deduce that
\eqa
  \lefteqn{d_{TV}\{{\mathcal L}(Z_{n}(U) \mid Z_{n}(0)=i),{\mathcal L}(Z_{n}(U)+1 \mid Z_{n}(0)=i)\}}\non\\
  &=& \frac{1}{2}\sum_{k \in {\mathbb Z}}|{\mathbb P}(Z_{n}(U)=k \mid Z_{n}(0)=i)
     -{\mathbb P}(Z_{n}(U)=k-1 \mid Z_{n}(0)=i)| \non\\
  &=& \frac{1}{2}\sum_{k \in {\mathbb Z}}\left|\sum_{l\geq 0}{\mathbb P}(N_{n}(U)=l)
    {\mathbb P}(X_{n}(U)=k-l \mid N_{n}(U)=l, X_{n}(0)=i) \right. \non\\
  &&\ \mbox{}\left. - \sum_{l\geq 1}{\mathbb P}(N_{n}(U)=l-1)
    {\mathbb P}(X_{n}(U)=k-l \mid N_{n}(U)=l-1, X_{n}(0)=i) \right| \non\\
  &\le&  \frac{1}{2}\sum_{k \in {\mathbb Z}}\sum_{l \geq 0}
       |{\mathbb P}(N_{n}(U)=l)-{\mathbb P}(N_{n}(U)=l-1)|f^{U}_{l,i}(k-l) \non\\
  && \ \mbox{}+\frac{1}{2}\sum_{k \in {\mathbb Z}}\sum_{l \geq 1}{\mathbb P}(N_{n}(U)=l-1)
      |f^{U}_{l,i}(k-l)-f^U_{l-1,i}(k-l)|, \label{Poisson-part}
\end{eqnarray}
where
\begin{equation}\label{dens1}
    f^{U}_{l,i}(m):={\mathbb P}(X_{n}(U)=m \mid N_{n}(U)=l, X_{n}(0)=i). 
\end{equation}
Since, from Barbour, Holst and Janson~(1992, Theorem~1.C),
\begin{equation}\label{BHJ}
  \sum_{l \geq 0}|{\mathbb P}(N_{n}(U)=l)-{\mathbb P}(N_{n}(U)=l-1)|
   \ \leq\ \frac{1}{\sqrt{n\lambda^0U}} \ =\ O\Big(\frac{1}{\sqrt{n}}\Big),
\end{equation} 
the first term in~\Ref{Poisson-part} is bounded by $1/\{\sqrt{n\lambda^0U}\}$,
yielding a contribution of the same size to~$D_{1n}(\d')$ in~\Ref{bound-1}, and
it remains only to control the differences between the conditional probabilities
$f^{U}_{l,i}(m)$ and~$f^{U}_{l-1,i}(m)$.

To make the comparison between $f^{U}_{l,i}(m)$ and~$f^{U}_{l-1,i}(m)$, we first
condition on the whole Poisson paths of~$N_n$ leading to the events $\{N_{n}(U)=l\}$
and $\{N_{n}(U)=l-1\}$, respectively, chosen to be suitably matched; we write
\eqa
  f^{U}_{l,i}(m) &=&  \frac 1{U^l} \int_{[0,U]^l}ds_1\,\ldots\,ds_{l-1}\,ds^* \non\\ 
   &&\hspace{0.21in}   {\mathbb P}(X_{n}(U)=m \mid N_n[0,U]=\n^l(\cdot\;;s_1,\ldots,s_{l-1}, s^*), X_{n}(0)=i);\non\\
 f^U_{l-1,i}(m) &=& \frac 1 {U^l}\int_{[0,U]^{l}}ds_1...ds_{l-1}ds^*  \non \\
 && \quad  {\mathbb P}(X_{n}(U)=m \mid N_n[0,U]=\n^{l-1}(\cdot\;;s_1,\ldots,s_{l-1}), X_{n}(0)=i) ,
   \label{dens2}
\ena
where
$$
   \n^r(u;t_1, \ldots, t_{r})\ :=\ \sum_{i=1}^{r} \charI_{[0,u]}(t_i),
$$
and $Y[0,u]$ is used to denote $(Y(s),\,0\le s\le u)$.  Fixing $s_1,s_2,\ldots,s_{l-1}$, 
let $\bPstar$ denote the distribution of~$X_n$ conditional on 
$N_n[0,U] = \n^l(\cdot\;;s_1,\ldots,s_{l-1}, s^*)$ and  $X_{n}(0)=i$, and let~$\bP_i$
denote that conditional on 
$N_n[0,U] = \n^{l-1}(\cdot\;;s_1,\ldots,s_{l-1})$ and  $X_{n}(0)=i$; let
$\rstar(u,x)$ denote the Radon--Nikodym derivative $d\bPstar/d\bP_i$  evaulated at the
path $x[0,u]$. Then
$$
    \bPstar[X_n(U)=m]  \Eq \int_{\{x[0,U]\colon x(U)=m\}} \rstar(U,x)\,d\bP_i(x[0,U]),
$$
and hence
\eq\label{pm-diff}
  \bPstar[X_n(U)=m] - \bP_i[X_n(U)=m] \Eq \int \charI_{\{m\}}(x(U))\{\rstar(U,x)-1\}\,d\bP_i(x[0,U]).
\en
Thus
\eqa
  \lefteqn{\sum_{m\in\bZ}|f^U_{l,i}(m) - f^U_{l-1,i}(m)|}\non\\
   &\le& \frac 1{U^l} \int_{[0,U]^l}ds_1\,\ldots\,ds_{l-1}\,ds^* \sum_{m\in\bZ}
      \bE_i\left\{\charI_{\{m\}}(X_n(U))|\rstar(U,X_n)-1|\right\} \non\\
   &\le&  \frac 2{U^l} \int_{[0,U]^l}ds_1\,\ldots\,ds_{l-1}\,ds^*\,
      \bE_i\left\{[1 - \rstar(U,X_n)]_+\right\}. \label{RN-bound}
\ena

To evaluate the expectation, note that $\rstar(u,X_n)$, $u\ge0$, is a $\bP_i$-martingale
with expectation~$1$.
Now, if the path $x[0,U]$ has~$r$ jumps at times $t_1 < \cdots < t_r$, writing 
$$
  y(v) \ :=\ x(v) + \n^{l-1}(v\;;s_1,\ldots,s_{l-1}),\quad y_k \ :=\ y(t_k),\quad
  j_k \ :=\ y_k - y_{k-1},
$$
we have
$$
  \rstar(u,x) \Eq 
  \begin{cases}
      1  
             &\mbox{if}\quad u < s^*; \\
      \exp\left( -n\int_{s^*}^u \{\hl(y({v})+n^{-1}) - \hl(y({v}))\}\,{dv}\right)\\
       \qquad\prod_{\{k\colon s^* \le t_k \le u\}}
        \left\{\hl_{j_k}(y_{k-1} + n^{-1}) / \hl_{j_k}(y_{k-1})\right\}  
            &\mbox{if}\quad u \ge s^*,
  \end{cases} 
$$
where $\hl_j(\cdot) = \l_j(\cdot)$ if $j\ne1$ and $\hl_1(\cdot) = \l_1(\cdot) - \l^0$,
and where $\hl(\cdot) := \sjmo \hl_j(\cdot)$.
Thus, in particular, $\rstar(u,x)$ is
absolutely continuous except for jumps at the times $t_k$.  Then also, from
Assumptions A3\,(a) and~A4,
$$
   \left|\frac{\l_j(y+n^{-1})}{\l_j(y)} - 1 \right| \Le \frac{\|\l'_j\|_\d}{n\e\l_j(c)}
      \Le |j|L_1/\{n\e\},
$$
uniformly in $|y-c| \le \d$, for each $j\in J$.  Hence it follows that, if we define 
the stopping times
\eqa
   \t_\d &:=& \inf\{u\ge0: |X_n(u) + \n^{l-1}(u\;;s_1,\ldots,s_{l-1}) - nc| > n\d\};\non\\
   \f  &:=&  \inf\{u\ge0: \rstar(u,X_n) \ge 2\}, \label{stopping}
\ena
then the expected quadratic variation of the martingale~$\rstar(u,X_n)$ up to the time
$\min\{U,\t_\d,\f\}$ is at most 
\eq\label{QV}
   4U\sum_{j \in \bZ\setminus\{0\}} \left(\frac{|j|L_1}{n\e}\right)^2\,nc_j(1+\d) 
        \ =:\ n^{-1}K(\d,\e) U,
\en
where $K(\d,\e) < \infty$ by Assumption~A2\,(a).

Clearly, from~\Ref{QV} and from Kolmogorov's inequality, 
$$
   \bP_i[\f < \min\{U,\t_\d\}] \Le K(\d,\e)U/n.
$$
Hence, again from~\Ref{QV},
$$
  \bE_i\left\{[1 - \rstar(U,X_n)]_+\right\} \Le n^{-1/2}\sqrt{K(\d,\e) U} + n^{-1}K(\d,\e)U
   + \bP_i[\t_\d < U].
$$
Substituting this into~\Ref{RN-bound}, it follows that
\eqs
   \lefteqn{\sum_{l \geq 1}{\mathbb P}(N_{n}(U)=l-1)\sum_{m\in\bZ}|f^U_{l,i}(m) - f^U_{l-1,i}(m)|}\\
    && \Le 2\Bigl\{n^{-1/2}\sqrt{K(\d,\e) U} + n^{-1}K(\d,\e)U \\
    &&\qquad\qquad\mbox{}   + \bP[\sup_{0 \le u \le U}|Z_n(u) - nc| > n\d \mid Z_n(0) = i]\Bigr\}.
\ens
But now, for all~$i$ such that $|i - nc| \le n\d' = n\d e^{-K_1U}/2$, the latter probability
is of order $O(n^{-1})$, by Lemma~\ref{lema3}, and hence the final term in~\Ref{Poisson-part}
is also of order $O(n^{-1/2})$, as required.
\end{proof}

\ignore{
 We now need to compare ${\mathcal L}(Z_{n}(U) \mid Z_{n}(0)=z)$ with ${\mathcal L}(Z_{n}(U)+1 \mid
 Z_{n}(0)=z)$ for any given and fixed $U,$ where $Z_n$ is the Markov process with generator ${\mathcal
 A}_n.$ As in R\"ollin~(2005), a good way of doing this is to find a random variable $N$ embedded in
 $Z_n(U)$ for which ${\mathcal L}(N)$ and ${\mathcal L}(N+1)$ are close enough, and to exploit this. Here,
 we use a Poisson process with jumps of size $+1$ at rate $n\lambda^0$ to provide our $N.$ We thus need to
 split $Z_n$ into a sum of two processes, one of which is this Poisson process. The construction of 
the appropriate bivariate process $(X_n, N_n)$  is done at {\small \bf Step 2}. 
Using the newly defined processes, we are able to bound $d_{TV}\{\Pi_{n},\Pi_{n}*\delta_1\}$
by the sum of two terms, see {\small \bf Step 3}. The first one of these we can further bound by the
total-variation distance between a Poisson distribution and its unit translation. This distance, by
 Barbour, Holst and Janson~(1992, Theorem~1.C),  can be bounded by $O(1/\sqrt{n}),$  
see {\small \bf Step 4}. 
The last step, {\small \bf Step 5}, is concerned with finding a bound of the same size on the second
term in the sum from {\small \bf Step 3}. Here, the essence of the argument is to show that the 
conditional distribution of the paths of $X_n$ on $[0,U],$ given the path of the Poisson 
process $\{N_n(t), \ 0\leq t\leq U\},$ changes only little in total variation if an extra jump 
is added to $N_n:$ that is to say, ${\mathcal L}(X_{n}(U) \mid N_n(t)=n(t), 0\leq t\leq U; X_{n}(0)=z)$ 
and ${\mathcal L}(X_{n}(U) \mid N_n(t)=n(t)+\charI_{[s^*,\infty)}(t), 0\leq t\leq U; X_{n}(0)=z),$ 
where $s^*$ denotes the time of the extra jump, are $O(1/\sqrt{n})$ close. The detailed argument 
requires several sub-steps, that we shall explain in due time.\\
}

As a consequence of this theorem, we have the following corollary.

 \begin{corollary} \label{lema2} Under Assumptions A1--A4, for any bounded function~$f$,
$$
   {\mathbb E}_{\Pi_n}\{\bigtriangledown f(Z_n)\} \Eq O\Big(\frac{1}{\sqrt{n}}\,\|f\|\Big).
$$
\end{corollary}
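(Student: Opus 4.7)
The plan is to reduce this directly to Theorem \ref{thdtvpi}. Writing out the definition of $\bigtriangledown$,
\[
   \bE_{\Pi_n}\{\bigtriangledown f(Z_n)\}\Eq \sum_{i\in\bZ} f(i)\Pi_n(i) - \sum_{i\in\bZ} f(i-1)\Pi_n(i)
   \Eq \sum_{j\in\bZ} f(j)\bigl[\Pi_n(j) - (\Pi_n*\d_{-1})(j)\bigr],
\]
after re-indexing the second sum with $j = i-1$. This shows that $\bE_{\Pi_n}\{\bigtriangledown f(Z_n)\}$ is exactly the integral of~$f$ against the signed measure $\Pi_n - \Pi_n*\d_{-1}$, and hence is bounded in absolute value by $2\|f\|\,d_{TV}\{\Pi_n,\Pi_n*\d_{-1}\}$.

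It then suffices to observe that $d_{TV}\{\Pi_n,\Pi_n*\d_{-1}\} = d_{TV}\{\Pi_n*\d_{1},\Pi_n\}$, since total variation is invariant under simultaneously translating both measures by the same integer. Applying Theorem~\ref{thdtvpi} gives $d_{TV}\{\Pi_n,\Pi_n*\d_{-1}\} \le Kn^{-1/2}$, and so $|\bE_{\Pi_n}\{\bigtriangledown f(Z_n)\}| \le 2K n^{-1/2}\|f\|$, which is the claimed bound.

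There is essentially no obstacle here: once Theorem~\ref{thdtvpi} is in hand, the corollary is immediate from the elementary duality between total variation distance and integration against bounded test functions. The only thing one must be slightly careful about is the direction of the shift (the $\bigtriangledown$ operator involves $f(Z_n-1)$, producing $\Pi_n*\d_{-1}$ rather than $\Pi_n*\d_1$), but this is resolved by the translation invariance of~$d_{TV}$.
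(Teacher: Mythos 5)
Your argument is correct and is exactly the paper's proof, which simply states the bound $|{\mathbb E}_{\Pi_n}\{\bigtriangledown f(Z_n)\}| \le 2\|f\|\,d_{TV}(\Pi_n,\Pi_n*\d_1)$ and invokes Theorem~\ref{thdtvpi}; you have merely spelled out the duality and the (harmless) direction of the shift. No issues.
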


\begin{proof}  Immediate, because
$$
   |{\mathbb E}_{\Pi_n}\{\bigtriangledown f(Z_n)\}| \Le 2\|f\|\,d_{TV}(\Pi_n,\Pi_n*\d_1).
$$
\end{proof}

 \setcounter{equation}{0}  
 \section{Translated Poisson approximation to the equilibrium distribution}
\label{main}
 
We are now able to prove our main theorem.  The centred equilibrium distribution of~$Z_n$
is $\widehat{\Pi}_n:=\Pi_n*\d_{-\lfloor nc \rfloor}$, and we approximate it by a centred 
Poisson distribution with similar variance.

\begin{theorem}\label{main-thm}
Under Assumptions A1--A5,
$$
  d_{TV}(\widehat{\rm Po}(nv_c),\widehat{\Pi}_n) \Eq O(n^{-\a/2}), 
$$ 
where $v_c := \s^2(c)/\{-2F'(c)\}$.
\end{theorem}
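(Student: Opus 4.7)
The plan is to bound~\Ref{SEt-2} with $v:=nv_c$ and $W:=Z_n-\lfloor nc\rfloor$; the tail $\bP_{\Pi_n}[W<-\lfloor v\rfloor]$ is $O(n^{-1})$ by Corollary~\ref{prob-n1}. For fixed $B\subset\Z_v$, let $g=g_{v,B}$ be the Stein solution satisfying~\Ref{iSEt}, and set $\tilde g(i):=g(i-\lfloor nc\rfloor)$, so that $\tilde g(Z_n)=g(W)$. Choose $h$ with $g_h=\tilde g$, e.g.\ $h(i):=\sum_{k<i}\tilde g(k)$, verify~\Ref{HK-1} using $\|\tilde g\|_\infty\le 1$ and A2\,(a), and apply Dynkin's formula together with Lemma~\ref{generator} to obtain
\[
0\Eq\bE_{\Pi_n}\!\left[\tfrac{n}{2}\s^2(z_n)\bigtriangledown\tilde g(Z_n)+nF(z_n)\,\tilde g(Z_n)+E_n(\tilde g,Z_n)\right].
\]

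On the good set $G:=\{|z_n-c|\le\d\}$, I would Taylor-expand $\s^2(z_n)=\s^2(c)+O(|z_n-c|)$ using A4 and $F(z_n)=F'(c)(z_n-c)+O((z_n-c)^2)$ using A5; the identity $\s^2(c)=-2F'(c)v_c$ rearranges the leading terms into $-F'(c)\{nv_c\bigtriangledown\tilde g(Z_n)-(Z_n-nc)\tilde g(Z_n)\}$. The Taylor remainders, combined with the estimates $\|\tilde g\|_\infty\le(nv_c)^{-1/2}$ and $\|\bigtriangledown\tilde g\|_\infty\le(nv_c)^{-1}$, yield an expected error of order $O(n^{-1/2})$ after invoking Theorem~\ref{exubj} (giving $\bE_{\Pi_n}[(z_n-c)^2\charI_G]=O(n^{-1})$) and Cauchy--Schwarz. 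Off $G$, Corollary~\ref{prob-n1}, Theorem~\ref{exubj} and A2\,(a) together bound all terms at $O(n^{-1})$. Replacing $\bigtriangledown\tilde g(Z_n)$ by $\bigtriangledown\tilde g(Z_n+1)=\bigtriangledown g(W+1)$ costs $nv_c\,|\bE[\bigtriangledown^2\tilde g(Z_n+1)]|=O(n^{-1/2})$, via~\Ref{delta-2-bnd}, i.e.\ Corollary~\ref{lema2} applied to $f=\bigtriangledown\tilde g$. Finally, $-(Z_n-nc)\tilde g(Z_n)=-Wg(W)+\langle nc\rangle g(W)$, and $|\langle nc\rangle-\langle v\rangle|\cdot\|g\|_\infty=O(n^{-1/2})$ accounts for the constant-term mismatch with the Stein form.

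The crux is showing $\bE_{\Pi_n}|E_n(\tilde g,Z_n)|=O(n^{-\a/2})$. The summand $-\tfrac{n}{2}F(z_n)\bigtriangledown\tilde g(Z_n)$ is $O(n^{-1/2})$ on $G$ (using $|F(z_n)|\le K_1|z_n-c|$ and Corollary~\ref{prop1}) and $O(n^{-1})$ off $G$. For the series $\sum_{j\ge2}n\l_j(z_n)a_j(\tilde g,Z_n)$, A3\,(a) restricts to $j\in J$ on $G$; writing $\l_j(z_n)=\l_j(c)+[\l_j(z_n)-\l_j(c)]$ and using $|\l_j'|\le L_1\l_j(c)$ from A4 together with the pointwise bound $|a_j|\le j^2/(nv_c)$ from~\Ref{aj-bnd-1}, the remainder and off-$G$ contributions sum to $O(n^{-1/2})$. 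What remains is $\sum_{j\in J}n\l_j(c)\bE[a_j(\tilde g,Z_n)]$, for which I need two complementary estimates of $|\bE[a_j]|$: pointwise, $|a_j|\le j^2/(nv_c)$, and via~\Ref{aj-bnd-2} together with Corollary~\ref{lema2} applied to $\bigtriangledown\tilde g$,
\[
|\bE[a_j(\tilde g,Z_n)]|\le\sum_{k=2}^{j}\binom{k}{2}\,|\bE[\bigtriangledown^2\tilde g(Z_n+j-k+1)]|=O(j^3 n^{-3/2}).
\]
Splitting at the threshold $j\sim\sqrt n$ and using $\sum_j|j|^{2+\a}c_j<\infty$ from A2\,(a), the small-$j$ piece contributes $n^{-1/2}\sum_{j\le\sqrt n}j^3\l_j(c)\le n^{-\a/2}\sum_j j^{2+\a}c_j$ and the large-$j$ piece contributes $\sum_{j>\sqrt n}j^2\l_j(c)/v_c\le n^{-\a/2}(v_c)^{-1}\sum_j j^{2+\a}c_j$, both of order $O(n^{-\a/2})$. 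The $b_j$ series is treated symmetrically.

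The main obstacle is precisely this splitting step: one must trade off the pointwise Stein-solution bound~\Ref{iSEt} against the smoothness of~$\Pi_n$ inherited from Theorem~\ref{thdtvpi}, in such a way that the powers of $j$ and $n$ line up exactly with the $(2+\a)$-th moment assumption. Assembling the pieces into~\Ref{SEt-2} and taking the supremum over $B\subset\Z_v$ then delivers the claimed bound.
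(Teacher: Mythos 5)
Your proposal is correct and follows essentially the same route as the paper's proof: Dynkin's formula plus Lemma~\ref{generator} to produce the centred-Poisson Stein operator with errors $E_n'+E_n$, the concentration results of Section~\ref{equilibrium} for the Taylor remainders and off-$\{|z_n-c|\le\d\}$ terms, and Corollary~\ref{lema2} (i.e.\ Theorem~\ref{thdtvpi}) traded against the pointwise bound $|a_j|\le j(j-1)/(nv_c)$ with the split at $j\sim\sqrt n$, exactly as in the paper's estimates \Ref{e-3} and \Ref{e-6}. The only differences are cosmetic bookkeeping (your $\langle nc\rangle$ versus the paper's $\langle nv_c\rangle$ correction term, and the choice of additive constant in $h$), neither of which affects the argument.
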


\ignore{
For the example in the Preliminaries Section, we have $c=\frac{a}{d-\sum_{j\geq 1}jb_j},$
$-2F^{\prime}(c)=2(d-\sum_{j\geq 1}jb_j)$ and $\sigma^2(c)=a+\frac{a}{d-\sum_{j\geq 1}jb_j}(d+\sum_{j\geq 1}j^2b_j),$ so that the shifted equilibrium distribution $$\Pi_n-\Big\lfloor \frac{na}{d-\sum_{j\geq 1}jb_j}\Big\rfloor$$
 can be  approximated in total variation by the shifted Poisson distribution 
 $$\widehat{\rm Po}\Big(\frac{na(2d+\sum_{j\geq 1}j(j-1)b_j)}{2(d-\sum_{j\geq 1}jb_j)^2}\Big),$$
 to order $O(1/\sqrt{n}).$
  Note that if $b_j=0,$ for any $j\geq 1,$ than the process becomes a simple death with immigration process, whose equilibrium distribution is precisely the Poisson distribution 
  ${\rm Po}\big(\frac{na}{d}\big)={\rm Po}(nc).$ In this case, the approximation is in fact exact.
}

\begin{proof}
We follow the recipe outlined in Section~\ref{prelims}.  
From~\Ref{SEt-2}, we principally need to show that
$$
  \sup_{g\in\gg_{v}}
    |\bE\{{v}\;\bigtriangledown{g}(W+1)-W{g}(W) +\langle v \rangle{g}(W)\}| \Eq O(n^{-\a/2}),
$$
for $W := Z_n - \intnc$, $v := nv_c$ and~$\bE := \bE_{\Pi_n}$.
So, for any $g\in\gg_{nv_c}$,  write $\tg(i) := g(i - \intnc)$, and set 
$$
  h \ :=\ h_{n,g}(i)\ :=\ 
  \begin{cases}
     0,  &\mbox{if}\quad i \le \lfloor nc \rfloor - \lfloor {nv_c} \rfloor;\\ 
     \sum_{l =  \intnc - \lfloor {nv_c} \rfloor}^{i  - 1}\tg(l)
         &\mbox{if}\quad i > \lfloor nc \rfloor - \lfloor {nv_c} \rfloor.
  \end{cases}
$$
Note that, for $j\ge1$, by Assumption A2\,(a),
\eqs
  n\l_j(i/n)|h(i+j)-h(i)| &\le& njc_j\|\tg\| + c_j|i-\lfloor nc \rfloor|
     \sum_{k=1}^j |g(i+j-k-\lfloor nc \rfloor)|\\
  &\le& njc_j\|g\| + jc_j \sup_l|lg(l)| + c_j\sum_{k=1}^j |j-k|\|g\|,
\ens
and that a similar bound, with $|j|$ replacing~$j$, is valid for $j\le -1$. 
From the definition of~$\gg_{nv_c}$ in \Ref{SEt} and~\Ref{iSEt} and from Assumption~A2\,(a), 
it thus follows
that $(|{\mathcal A}_n|h_{n,g})$ is a bounded function, and hence that
the function~$h_{n,g}$ satisfies condition~\Ref{HK-1}; furthermore, since 
$|h_{n,g}(i)| \le |i - \lfloor nc \rfloor + \lfloor {nv_c} \rfloor|$, in view of~\Ref{iSEt},
$h_{n,g}$ is integrable with respect to~$\Pi_n$, because of Theorem~\ref{exubj}.
Hence it satisfies the conditions of Theorem~\ref{HK95}, from which we deduce, as
in~\Ref{DF}, that
$$
   \bE_{\Pi_n}({\mathcal A}_n h_{n,g})(Z_n) \Eq 0.
$$

Applying Lemma~\ref{generator}, since~$h_{n,g}$ has bounded differences in view of~\Ref{iSEt},
it follows that
\eqa
   0 &=& \bE_{\Pi_n}\left\{\frac{n}{2}{\sigma}^2\Big(\frac{Z_n}{n}\Big)\bigtriangledown{\tg}(Z_n)
        +nF\Big(\frac{Z_n}{n}\Big)\tg(Z_n) + E_n(\tg,Z_n)\right\}\non \\
     &=& -F'(c)\bE_{\Pi_n}\left\{ nv_c\bigtriangledown{\tg}(Z_n) - 
          (Z_n-\lfloor nc\rfloor)\tg(Z_n) 
             +\langle nv_c \rangle{\tg}(Z_n)\right\}\non \\
     &&\quad\mbox{} + \bE_{\Pi_n}\{ E'_n(\tg,Z_n) + E_n(\tg,Z_n)\}, \label{DE-1}
\ena
where $E_n$ is as defined in~\Ref{En-def}, and
\eqs
    E'_n(g,i) &:=& \frac{n}{2}({\sigma}^2(i/n) - \s^2(c))\bigtriangledown{g}(i)\\
    &&\quad\mbox{}  + \{n(F(i/n) - F(c)) - F'(c)(i-\lfloor nc\rfloor)\}g(i) 
        + F'(c) \langle nv_c \rangle{g}(i).
\ens
The terms involving $E'_n(\tg,i)$ can be bounded, using~\Ref{iSEt}, as follows.  First,
using Assumptions A2\,(a) and~A4,
\eqa
   \lefteqn{ \frac{n}{2}|{\sigma}^2(i/n) - \s^2(c)|\,|\bigtriangledown{\tg}(i)|}\non\\ 
        &&\Le\frac{1}{2nv_c}\|(\sigma^2)^{\prime}\|_{\delta} |i-nc|I[|i-nc| \le n\d] \non\\
     &&\mbox{}\qquad + \frac{1}{2v_c}\Bigl(\sjmo j^2c_j(1 + |i/n-c|) + \s^2(c)\Bigr)I[|i-nc| > n\d];
     \label{e-dash-1}
\ena
and then, under Assumptions A2\,(a) and~A5,
\eqa
    \lefteqn{|n(F(i/n) - F(c)) - F'(c)(i-\lfloor nc\rfloor) + F'(c) \langle nv_c \rangle|
                      \,|\tg(i)|}\non\\
    &&\Eq n|F(i/n) - F(c) - (i/n-c)F'(c)|\,|\tg(i)| \non\\
    &&\Le \Bigl(\frac n2 (i/n-c)^2 I[|i/n-c| \le \d] \sup_{|z-c|\le\d}|F''(z)| \non\\
    &&\mbox{}\qquad  + n\Bigl\{(1+|i/n-c|)\sjmo |j|c_j + F'(c)|i/n-c|\Bigr\}I[|i-nc| > \d]\Bigr)\frac1{\sqrt{nv_c}}.
    \label{e-dash-2}
\ena
The contribution to~\Ref{DE-1} from $\bE_{\Pi_n}\{ E'_n(\tg,Z_n)\}$ is thus of order
\eqa
   \lefteqn{\bE_{\Pi_n}\{|z_n-c| + (1 + |z_n-c|)I[|z_n-c| > \d] + |z_n-c|^2 I[|z_n-c| \le \d]\}}\non\\
    &&\Eq O(n^{-1/2}), \hspace{3.5in}\label{Order-1}
\ena
by Theorem~\ref{exubj} and Corollaries \ref{prop1} and~\ref{prob-n1}.
The first term in $E_n(\tg,i)$ is also bounded in similar fashion:
from Assumptions A1, A2\,(a) and~A4,
\eqa
   \lefteqn{\frac{n}{2}|F(i/n)|\,|\bigtriangledown{\tg}(i)|} \non\\
      &&\Le \frac1{2nv_c} \{\|F'\|_\d|i-nc| +  \sjmo c_j|j|(1 + |i-nc|)I[|i-nc| > \d]\}.
   \label{e-1}          
\ena
giving a contribution to $\bE_{\Pi_n}\{ E_n(\tg,Z_n)\}$ of the same order.
The remaining terms, involving $\bigtriangledown^2{\tg}$, need to be treated more
carefully. 

 We examine the first of them in detail, with the treatment of the second being
entirely similar.  First, if either $|i/n-c| > \d$ or $j > \sqrt n$, it is enough to use
the expression in~\Ref{aj-bnd-1} to give
\eqa
   |a_j(\tg,i)|
     &\le& j(j-1)\|\bigtriangledown{\tg}\| \Le j(j-1)/(nv_c). \label{aj-part1}
\ena
For $|i/n-c| > \d$, by Assumption~A2\,(a), this yields the estimate
\eqa
   \lefteqn{\left|\sum_{j\ge2}a_j(\tg,i) n\l_j(i/n)\right|I[|i-nc| > \d]}\non\\
   &\le& \sum_{j\ge2} \frac{j(j-1)c_j}{v_c}(1 + |i/n-c|)I[|i-nc| > \d], \label{e-2}
\ena
with corresponding contribution to $\bE_{\Pi_n}\{ E_n(\tg,Z_n)\}$ being of order $O(n^{-1})$,
by Theorem~\ref{exubj} and Corollary~\ref{prob-n1}.
Then, for $j > \sqrt n$ and $|i/n-c| \le \d$, \Ref{aj-part1} yields
\eqa
   \lefteqn{\left|\sum_{j>\sqrt n} a_j(\tg,i) n\l_j(i/n)\right|}\non\\
   &\le& \sum_{j>\sqrt n} \frac{j(j-1)c_j}{v_c}(1 + \d) 
                  \Le \sum_{j\ge1}j^{2+\a}c_j n^{-\a/2}(1+\d)/v_c,
     \label{e-3}
\ena
making a contribution of order $O(n^{-\a/2})$ to $\bE_{\Pi_n}\{ E_n(\tg,Z_n)\}$,
again using Assumption~A2\,(a).  
In the remaining case, in which $j \le \sqrt n$ and $|i/n-c| \le \d$, we use
\Ref{aj-bnd-2}, observing first that
\eqa
   \lefteqn{n\bigtriangledown^2{\tg}(i+j-k+1) \l_j(i/n)} \non\\
   &&=\ n\bigtriangledown^2{\tg}(i+j-k+1) \l_j(c) +
        n\bigtriangledown^2{\tg}(i+j-k+1) (\l_j(i/n) - \l_j(c)), \label{e-4}
\ena
the latter expression being bounded by 
\eqa
    |n\bigtriangledown^2{\tg}(i+j-k+1) (\l_j(i/n) - \l_j(c))|
     &\le&  \frac2{v_c}\|\l_j'\|_\d \,|i/n-c|. \label{e-5}
\ena
The corresponding contribution to $\bE_{\Pi_n}\{ E_n(\tg,Z_n)\}$ is thus at most
\eqa
    &&\sum_{j=2}^{\lfloor\sqrt n\rfloor} (j^3/6)
      \{\l_j(c) n\sup_l|\bE_{\Pi_n}\bigtriangledown^2{\tg}(Z_n+l)|
        + 2 v_c^{-1}\|\l_j'\|_\d \,\bE_{\Pi_n}|z_n-c| \}\non\\
    &&\quad\Le n^{(1-\a)/2} \sum_{j\ge2} j^{2+\a}c_j
       \{n \sup_l|\bE_{\Pi_n}\bigtriangledown^2{\tg}(Z_n+l)| 
             + L_1 2 v_c^{-1}\bE_{\Pi_n}|z_n-c| \}\non\\
    &&\quad\Eq {n^{(1-\alpha)/2}\; O\bigl(n\cdot n^{-3/2}+n^{-1/2} \bigr) \Eq O(n^{-\alpha/2})} , \label{e-6}
\ena
where we have used Assumptions A2\,(a) and~A4, and then Corollaries \ref{prop1} and~\ref{lema2},
and finally~\Ref{iSEt}.

Combining the bounds, and substituting them into~\Ref{DE-1}, it follows that
$$
  |\bE_{\Pi_n}\left\{ nv_c\bigtriangledown{g}(Z_n-\intnc) - 
       (Z_n-\lfloor nc\rfloor)g(Z_n-\intnc) 
           +\langle nv_c \rangle{g}(Z_n-\intnc)\right\}|
    \Eq O(n^{-\a/2}),
$$
uniformly in $g\in\gg_{nv_c}$.  Again from Corollary~\ref{lema2}, we also have
$$
   |nv_c \bE_{\Pi_n}\left\{ \bigtriangledown{g}(Z_n-\intnc) - \bigtriangledown{g}(Z_n-\intnc+1)
     \right\}|  \Eq O(n^{-1/2}),
$$
for any $g\in\gg_{nv_c}$. It thus follows from~\Ref{SEt-2} that
$$
   d_{TV}(\widehat{\rm Po}(nv_c),\widehat{\Pi}_n) \Eq 
        O\left(n^{-\a/2} + \bP_{\Pi_n}[Z_n - nc < -\lfloor nv_c\rfloor]\right), 
$$ 
and the latter probability is of order~$O(n^{-1})$ by Corollary~\ref{prob-n1}.
This completes the proof.
\end{proof} 

\medskip
\nin{\bf Example.}\ Consider an immigration birth and death process~$Z$, with births occurring in groups
of more than one individual at a time.  The process has transition rates as in Section~\ref{prelims},
with
$$
  \lambda_{-1}(z):=dz, \ \ \ \lambda_{1}(z):=a+bq_1z \ \ \mbox{and} \ \ \lambda_j(z):=bq_jz, \ j\geq 2,
$$ 
while $\lambda_j(z):=0$, $j<-1$. Here, $b$ denotes the rate at which birth events occur,
and $a>0$ represents the immigration rate.
The quantity~$q_j$ denotes the probability that~$j$ offspring are born at a birth event, so
that $\sum_{j\ge1} q_j = 1$; we write $m_r := \sum_{j\ge1} j^rq_j$ for the $r$'th moment of
this distribution. 
Then 
$$
  F(z)=a+z(bm_1-d), \quad\mbox{and}\quad \sigma^2(z)=a+z(bm_2+d). 
$$
Assumption~A1 is satisfied if $d>bm_1$, with
$c=a/(d-bm_1)$ and $F'(c) = -(d-bm_1)$.  Assumption~A2\,(a) is satisfied with 
$c_j = bq_j \max\{1,c\}$, $j\ge2$, $c_1=\max\{bq_1, a+bq_1c\}$, and $c_{-1}=d \max\{1,c\}$,
provided that $m_{2+\a} < \infty$ for some $0 < \a \le 1$; for Assumption~A2\,(b), simply
take $\lambda^0=a/2$.  The other assumptions are immediate.

The quantity $v_c$ appearing in Theorem~\ref{main-thm} then comes out to be
\[
    v_c \ :=\ \frac{a(2d+b(m_2-m_1))}{2(d-bm_1)^2}\,,
\]
and the approximation to the equilibrium distribution of $Z_n - \lfloor nc \rfloor$
is the centred Poisson distribution $\widehat{\rm Po}(nv_c)$, accurate in total
variation to order $O\bigl(n^{-\a/2}\bigr)$.
Note that, if $b=0$, then the process becomes a simple immigration death 
process, whose equilibrium distribution is precisely the Poisson distribution 
${\rm Po}\big(na/d\big)={\rm Po}(nc)$. In this special case, the approximation is in fact exact.

\ignore{
\section*{Acknowledgment}
The author would like to thank her Ph.D. advisor  A.~D.~Barbour for suggesting the topic 
and for many helpful discussions.
}

\end{document}